\newtheorem{theorem}{Theorem}[section]
\newtheorem{lemma}[theorem]{Lemma}
\theoremstyle{definition}
\newtheorem{definition}{Definition}[section]
\newtheorem{corollary}[theorem]{Corollary}
\theoremstyle{remark}
\newtheorem{remark}{Remark}[section]
\numberwithin{equation}{section}
\newcommand{\Rmnum}[1]{\expandafter\@slowromancap\romannumeral #1@}
\begin{document}

\title{Topological $R$-pressure and topological pressure of free semigroup actions }

\author{Yinan Zheng}
\address{School of Mathematics, South China University of Technology, Guangzhou 510641, P.R. China}
\email{13132591050@163.com}

\author{Qian Xiao}
\address{School of Mathematics, South China University of Technology, Guangzhou 510641, P.R. China}
\email{1309200581qianqian@163.com}

\subjclass[2010]{37A35, 37B40}



\begin{abstract}
In this paper we introduce the definition of topological $r$-pressure of free semigroup actions on compact metric space and provide some properties of it. Through skew-product transformation into a medium, we can obtain the following two main results. 1. We extend the result that the topological pressure  is the limit of topological $r$-pressure  in\cite{C} to free semigroup actions ($r\to 0$). 2. Let $f_i,$ $i=0, 1, \cdots, m-1$, be homeomorphisms on a  compact metric space. For any continuous function, we verify that the  topological pressure of $f_0, \cdots, f_{m-1}$  equals the topological pressure of $f_0^{-1}, \cdots, f_{m-1}^{-1}.$
\end{abstract}

\keywords{Topological $r$-pressure, Topological pressure, Free semigroup actions, Skew-product transformations}

\maketitle

\section{ Introduction }

The topological entropy was introduced by Adler et al\cite{Adler} as an invariant of topological conjugacy, which describes the complexity of a system. Later, using spanning sets and separated sets, Bowen\cite{Bowen} defined topological entropy for a uniformly continuous map on metric space and proved that for a compact space, they coincide with that defined by Adler et al.
As a natural extension of topological entropy, topological pressure is a rich source of dynamical systems. Ruelle\cite{Ruelle} first introduced the concept of topological pressure of additive potentials for expansive dynamical systems. Walters\cite{Walters,Walters1} then extended this concept to a compact space with the continuous transformation. Since topological pressure plays an important role in dynamical systems, some researchers try to find some generalization of topological pressure applicable to other systems (see, for example,\cite{Barreira,Cao,Chung,Huang, Lin,Ma,Ma1,Thompson,Zhang}).

In 1980, Feldman\cite{Feldman} introduced the concept of $r$-entropy for the interaction between $\mathbb{Z}$ and $\mathbb{R}^{n}$. On this basis, the notion of topological $r$-entropy and measure-theoretic $r$-entropy of a continuous map were introduced in \cite{Ren}.

Let $(X, d)$ be a compact metric space and continuous map $f$ from $X$ into itself. For a finite set $A$, we denote the cardinality of $A$ by Card$A$. Now for $x \in X,~n\geq 0,~\varepsilon>0$,
$0<r<1$ and $\varphi\in C\left( X,\mathbb{R} \right)$, let
\[
B(x,n,\varepsilon, r,f) =\left\{y\in X:\frac{1}{n}Card\{0\leq i \leq n-1:d(f^i x ,f^i y ) < \varepsilon\}> 1-r\right\}.
\]

A subset $F$ is called an $(n,\varepsilon,r,f)$-spanning set of $X$ (with respect to $f$), if for arbitrary $x\in X$, there exists $y\in F$, such that $x\in B(y,n,\varepsilon,r,f)$.

Denote by $r_{n}(f,\varepsilon,r)$ the smallest cardinality of any $(n,\varepsilon,r,f)$-spanning set of $X$. In \cite{Ren}, Ren et al defined
\[
h_{r}{(f,\varepsilon)}=\limsup_{n \to \infty} \frac{1}{n} \log r_{n}(f,\varepsilon,r),
\]
and
\[
h_{r}{(f)}=\lim_{\varepsilon \to 0}h_{r}{(f,\varepsilon)},
\]
$h_{r}{(f)}$ is called the topological $r$-entropy of $f$.

\begin{theorem}(\cite{Ren}, Corollary 2.5)
Suppose that $(X,d)$ is a compact space with the metric $d$, $f:X\rightarrow X$ is a continuous map. Then
\[
\lim_{r\to 0} h_{r}{(f)}=h{(f)},
\]
where $h{(f)}$ denotes the topological entropy of $f$.
\end{theorem}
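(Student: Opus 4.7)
The plan is to prove $\lim_{r\to 0}h_{r}(f)=h(f)$ by bounding $\lim_{r\to 0}h_{r}(f)$ from above and below by $h(f)$. The easy direction is $h_{r}(f)\le h(f)$ for every $r\in(0,1)$: any ordinary Bowen $(n,\varepsilon)$-spanning set automatically satisfies the weaker $(n,\varepsilon,r,f)$-spanning condition (full proximity along the orbit implies proximity on a proportion exceeding $1-r$), so $r_{n}(f,\varepsilon,r)\le r_{n}(f,\varepsilon)$, and the inequality passes to $h_{r}(f,\varepsilon)$ and then to $h_{r}(f)$.

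For the harder direction I would fix $\varepsilon>0$ and convert an $(n,\varepsilon,r,f)$-spanning set $F$ into an ordinary $(n,2\varepsilon)$-spanning set by paying a combinatorial price. Choose a finite $\varepsilon$-net $\{z_{1},\dots,z_{N(\varepsilon)}\}$ of $X$. To each $x\in X$ I assign a symbol consisting of (i) a witness $y\in F$ with $x\in B(y,n,\varepsilon,r,f)$, (ii) the ``good'' index set $I\subset\{0,\dots,n-1\}$ on which $d(f^{i}x,f^{i}y)<\varepsilon$ (so $|I|>(1-r)n$), and (iii) for each $i\in I^{c}$, the index of a net point within $\varepsilon$ of $f^{i}x$. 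Two points sharing the same symbol are Bowen-$2\varepsilon$-close along every iterate by the triangle inequality (through $y$ on good indices, through the chosen net point on bad ones), so each symbol class sits in a single $(n,2\varepsilon)$-ball. Counting symbols yields a bound of the form
\[
r_{n}(f,2\varepsilon)\;\le\;r_{n}(f,\varepsilon,r)\cdot(n+1)\binom{n}{\lfloor rn\rfloor}\cdot N(\varepsilon)^{\lfloor rn\rfloor}.
\]

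Taking $\tfrac{1}{n}\log$ and applying Stirling to the binomial yields $h(f,2\varepsilon)\le h_{r}(f,\varepsilon)+H(r)+r\log N(\varepsilon)$, where $H(r)=-r\log r-(1-r)\log(1-r)$ is the binary entropy. Since $r\mapsto h_{r}(f,\varepsilon)$ is nonincreasing in $r$ (smaller $r$ forces a stricter proximity requirement, hence more spanning points), the limit as $r\to 0$ exists; letting $r\to 0$ with $\varepsilon$ fixed kills the error and gives $h(f,2\varepsilon)\le\lim_{r\to 0}h_{r}(f)$, after which $\varepsilon\to 0$ completes the argument.

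The main obstacle I anticipate is the order of limits in this final step: the error term $r\log N(\varepsilon)$ contains a covering number that blows up as $\varepsilon\to 0$, so one absolutely must send $r\to 0$ first with $\varepsilon$ held fixed and only afterwards let $\varepsilon\to 0$. Monotonicity of $h_{r}(f,\varepsilon)$ in $r$ is what legitimises the inner limit and should be recorded as a preliminary observation; the polynomial prefactor $n+1$ and the interaction with the $\limsup_{n}$ defining $h_{r}(f,\varepsilon)$ are routine but need to be tracked carefully so the three-level limit interchange is rigorous.
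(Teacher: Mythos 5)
This statement is quoted in the paper from \cite{Ren} (Corollary 2.5) and no proof is reproduced in the text, so there is nothing internal to compare against; judged on its own, your argument is correct and is essentially the standard proof of this result. The easy inequality $h_r(f)\le h(f)$ is immediate (modulo the harmless mismatch between $d(f^ix,f^iy)\le\varepsilon$ in Bowen's spanning condition and the strict inequality in the definition of $B(y,n,\varepsilon,r,f)$, which disappears in the $\varepsilon\to 0$ limit), and your coding of each $x$ by a witness $y\in F$, the good index set $I$ with $|I^c|\le\lfloor rn\rfloor$, and a net-point index for each bad iterate does yield $r_n(f,2\varepsilon)\le r_n(f,\varepsilon,r)\,(n+1)\binom{n}{\lfloor rn\rfloor}N(\varepsilon)^{\lfloor rn\rfloor}$, with the error $H(r)+r\log N(\varepsilon)$ vanishing as $r\to 0$ for fixed $\varepsilon$. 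Your insistence on the order of limits ($r\to 0$ before $\varepsilon\to 0$) and on the monotonicity of $h_r(f,\varepsilon)$ in $r$ is exactly the right point to flag, and the argument as outlined is complete.
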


Moreover, Zhu et al \cite{Zhu} introduced the notion of topological $r$-entropy of free semigroup actions on a compact metric space and proved the following:
\begin{theorem}(\cite{Zhu}, Theorem 1.1)
Let $f_0, \dotsc, f_{m-1}$ be continuous transformations of a compact metric space $(X,d)$ to itself. Then
\[
\lim_{r\to 0}h_r(f_0, f_1, \cdots, f_{m-1})= h(f_0, f_1, \cdots, f_{m-1}),
\]
where $h(f_0, f_1, \cdots, f_{m-1})$ denotes the topological entropy of $f_0, \cdots, f_{m-1}$ and \\
$h_r(f_0, f_1, \cdots, f_{m-1})$  denotes the topological $r$-entropy of $f_0, \cdots, f_{m-1}$(\cite{Zhu},definition 3.1).
\end{theorem}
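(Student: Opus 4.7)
The plan is to reduce Theorem 1.2 to the single-map Theorem 1.1 by passing to the skew-product transformation, which is the technique flagged in the abstract for the paper's own pressure results. Let $\Sigma_m^+ = \{0, 1, \ldots, m-1\}^{\mathbb{N}}$ be endowed with the shift $\sigma$ and its usual compact metric, and introduce the skew-product $F : \Sigma_m^+ \times X \to \Sigma_m^+ \times X$ defined by $F(\omega, x) = (\sigma \omega, f_{\omega_0}(x))$, with the product space carrying the max metric. Since $F$ is a continuous map on a compact metric space, Theorem 1.1 applies directly and yields $\lim_{r \to 0} h_r(F) = h(F)$.

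It then suffices to prove two skew-product identities: the Bufetov-type formula
\[
h(F) = \log m + h(f_0, \ldots, f_{m-1}),
\]
and its $r$-entropy analogue
\[
h_r(F) = \log m + h_r(f_0, \ldots, f_{m-1}).
\]
Taking $r \to 0$ in the second and combining with the first together with the above limit for $F$, the $\log m$ terms cancel and the desired conclusion follows.

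The main obstacle is the second identity. Its heart is a careful comparison between the $(n, \varepsilon, r)$-Bowen balls of $F$ on $\Sigma_m^+ \times X$ and the word-indexed balls $B(x, n, \varepsilon, r, \omega)$ appearing in Zhu's definition. Given $\varepsilon > 0$, one selects $\varepsilon' > 0$ small enough that every $\varepsilon'$-ball in $\Sigma_m^+$ pins down a long prefix; closeness of $F$-orbits at time $i$ then forces agreement of $\omega$ and $\omega'$ on a block of coordinates starting at index $i$ and, simultaneously, closeness of the corresponding iterates in $X$. The density condition ``fraction of close times $> 1 - r$'' on the product transports, up to an $o(n)$ correction at indices $i$ near $n$, to the analogous condition defining $B(x, n, \varepsilon, r, \omega)$. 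In the reverse direction, spanning sets for the fiber over each of the $m^n$ words of length $n$ can be unioned into a spanning set for $F$; enumerating these words is exactly the source of the additive $\log m$.

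Subtleties I expect: aligning the density threshold $1 - r$ across the two sides may force a small perturbation of $r$, which should be absorbed in the final $\varepsilon \to 0$ step; the boundary indices near $n$ contribute an $o(n)$ term in the exponent that vanishes after dividing by $n$; and interchanging $\limsup_n$ with the $m^{-n}$-averaging over words (which is how $h_r(f_0, \ldots, f_{m-1})$ is defined in \cite{Zhu}) requires a monotonicity or subadditivity argument. Once the comparison is established, the rest is assembly, and the Bufetov identity (i) can either be cited or recovered as a special case of the same Bowen-ball comparison without the density relaxation.
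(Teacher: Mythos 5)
Your overall strategy --- reduce to the single-map result via the skew product $F(\omega,x)=(\sigma\omega,f_{\omega_0}(x))$ together with Bufetov's formula $h(F)=\log m+h(f_0,\ldots,f_{m-1})$ --- is exactly the route the paper takes for its pressure analogue (Theorem 1.4, proved in Section 4), and the route of \cite{Zhu} for this statement. However, your key lemma (ii), the exact identity $h_r(F)=\log m+h_r(f_0,\ldots,f_{m-1})$, is false, and in particular the inequality $h_r(F)\ge\log m+h_r(f_0,\ldots,f_{m-1})$ cannot be obtained by the Bowen-ball comparison you sketch. The $\log m$ in Bufetov's formula comes from the fact that points lying over distinct words of length $n$ are automatically $(n,\varepsilon)$-separated through the base coordinate; under the density relaxation this mechanism collapses, because two sequences differing in only a few coordinates of $[0,n-1]$ satisfy $d'(\sigma^i\omega,\sigma^i\omega')\ge\varepsilon$ for only $O(C(\varepsilon))$ values of $i$, a fraction of $n$ tending to $0<r$. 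Concretely, take $f_0=\cdots=f_{m-1}=\mathrm{id}_X$: then $h_r(f_0,\ldots,f_{m-1})=0$, while $h_r(F)$ reduces to $h_r(\sigma)$ for the full shift, which for fixed $r>0$ is strictly smaller than $\log m$ (a Hamming covering code of radius proportional to $rn$ already yields an $(n,\varepsilon,r)$-spanning set), so $h_r(F)<\log m+h_r(f_0,\ldots,f_{m-1})$.

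The gap is repairable, and the repair is precisely what the paper does in Section 4. You only need the one-sided inequality $h_r(F)\le\log m+h_r(f_0,\ldots,f_{m-1})$, obtained by unioning, over the $m^{n+2C(\varepsilon)}$ words $w_i$ of length $n+2C(\varepsilon)$, fiberwise $(\bar w_i',\varepsilon,r)$-spanning sets to produce an $(n,\varepsilon,r)$-spanning set for $F$ (this is the entropy version of Lemma 4.1), combined with the trivial monotonicity $h_r(f_0,\ldots,f_{m-1})\le h(f_0,\ldots,f_{m-1})$. Then $\limsup_{r\to0}h_r\le h$ follows from the latter, and $\liminf_{r\to0}h_r\ge\liminf_{r\to0}h_r(F)-\log m=h(F)-\log m=h$ follows from the former together with the single-map Theorem 1.1 and Bufetov's identity. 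The exact $r$-level identity is never needed and should be dropped from your plan.
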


In \cite{C}, Chen further introduced the concept of topological $r$-pressure in compact metric space and gave relevant properties.

For $\varphi\in C\left( X,\mathbb{R} \right)$, put
\[
Q_{n}\left(f,  \varphi, \varepsilon,r\right)=\inf \left\{\sum_{x \in F} e^{\left(S_{n} \varphi\right)(x)} : F \text { is } a\left(n, \varepsilon, r,f\right)\text {-spanning set of $X$}\right\}.
\]

In\cite{C}, Chen defined
\[
P_r(f,\varphi) =\lim_{\varepsilon \to 0} \limsup_{n \to \infty} \frac{1}{n} \log Q_{n}\left(f,  \varphi, \varepsilon,r\right),
\]
$P_r(f,\varphi)$ is called the topological $r$-pressure of $f$.

\begin{theorem}\label{222}(\cite{C}, Corollary 3.2.2)
Let $f\colon X\to X$ be a continuous map of a compact metric space $(X,d)$, and $\varphi\in C\left( X, \mathbb{R} \right)$. Then
\[
\lim_{r \to 0} P_r(f,\varphi) = P(f,\varphi),
\]
where $P(f,\varphi)$ denotes the topological pressure of $f$.
\end{theorem}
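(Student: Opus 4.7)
The plan is to establish the two-sided bound $P_r(f,\varphi) \leq P(f,\varphi) \leq \liminf_{r \to 0^+} P_r(f,\varphi)$; combined with the obvious monotonicity of $P_r(f,\varphi)$ in $r$, these force $\lim_{r \to 0^+} P_r(f,\varphi) = P(f,\varphi)$. The first inequality is easy: any classical Bowen $(n,\varepsilon,f)$-spanning set is automatically $(n,\varepsilon,r,f)$-spanning, since the fractional closeness condition is a weakening of the uniform one. Hence $Q_n(f,\varphi,\varepsilon,r) \leq Q_n(f,\varphi,\varepsilon)$, and applying $\tfrac{1}{n}\log$, $\limsup_n$, and $\lim_{\varepsilon \to 0}$ gives $P_r(f,\varphi) \leq P(f,\varphi)$ for every $r \in (0,1)$.

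For the reverse inequality I will promote a near-optimal $(n,\varepsilon,r,f)$-spanning set $F$ to a Bowen $(n,2\varepsilon,f)$-spanning set $G$ by correcting the at most $rn$ ``bad'' orbit positions with a fixed finite $\varepsilon$-cover $\mathcal{C}$ of $X$ of cardinality $N_\varepsilon$. For each $y \in F$, each subset $B \subset \{0,\dots,n-1\}$ with $|B| \leq rn$, and each labelling $(U_i)_{i \in B} \in \mathcal{C}^{B}$, I put one point $z$ into $G$ (when any exists) that satisfies $d(f^i z, f^i y) < \varepsilon$ for $i \notin B$ and $f^i z \in U_i$ for $i \in B$; triangle inequality then shows $G$ is $(n,2\varepsilon,f)$-spanning. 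Using the modulus of continuity $\omega_\varphi$ of $\varphi$ I get $|S_n\varphi(z) - S_n\varphi(y)| \leq n\omega_\varphi(\varepsilon) + 2 r n \|\varphi\|_\infty$, and by Stirling the number of admissible pairs $(B,(U_i))$ per $y$ is at most $2^{n H(r)} N_\varepsilon^{r n}$ for $r < 1/2$, where $H$ denotes binary entropy. Summing $e^{S_n \varphi}$ over $G$ yields
\[
Q_n(f,\varphi,2\varepsilon) \leq 2^{n H(r)} N_\varepsilon^{r n} \, e^{n(\omega_\varphi(\varepsilon) + 2 r \|\varphi\|_\infty)} \, Q_n(f,\varphi,\varepsilon,r),
\]
so after $\tfrac{1}{n}\log$ and $\limsup_n$,
\[
\limsup_{n \to \infty} \tfrac{1}{n} \log Q_n(f,\varphi,2\varepsilon) \leq H(r) \log 2 + r \log N_\varepsilon + \omega_\varphi(\varepsilon) + 2 r \|\varphi\|_\infty + \limsup_{n \to \infty} \tfrac{1}{n}\log Q_n(f,\varphi,\varepsilon,r).
\]

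The principal obstacle lies in the order of limits at this last step. I must send $r \to 0^+$ first with $\varepsilon$ (hence $N_\varepsilon$) held fixed, which kills all three combinatorial terms $H(r)\log 2$, $r \log N_\varepsilon$, and $2 r \|\varphi\|_\infty$ and, using that $\limsup_n \tfrac{1}{n} \log Q_n(f,\varphi,\varepsilon,r) \leq P_r(f,\varphi)$, produces
\[
\limsup_{n \to \infty} \tfrac{1}{n} \log Q_n(f,\varphi,2\varepsilon) - \omega_\varphi(\varepsilon) \leq \liminf_{r \to 0^+} P_r(f,\varphi);
\]
only afterwards should I send $\varepsilon \to 0$, erasing $\omega_\varphi(\varepsilon)$ and promoting the left side to $P(f,\varphi)$. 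The reverse order is fatal: for any fixed $r > 0$, the correction term $r \log N_\varepsilon$ diverges as $\varepsilon \to 0$, so the combinatorial cost of fixing bad orbit times cannot be absorbed. Beyond this bookkeeping the argument is a routine combination of Stirling's binomial estimate and the uniform continuity of $\varphi$.
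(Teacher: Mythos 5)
Your proof is correct, but there is nothing in the paper to measure it against: the paper does not prove this statement at all, it imports it verbatim from Chen's unpublished master's thesis \cite{C} and then uses it as a black box, both directly and through the skew product $F$ on $\Sigma_m\times X$ in the proof of Theorem \ref{11111}. Your argument is therefore a genuine supplement rather than a rederivation. Both halves are sound. The inequality $P_r(f,\varphi)\le P(f,\varphi)$ is the $m=1$ case of the paper's Remark \ref{1234} (modulo the harmless mismatch between the non-strict inequality in Bowen's spanning condition and the strict inequality in the definition of $B(y,n,\varepsilon,r,f)$, repaired by passing from $\varepsilon$ to $\varepsilon/2$ before letting $\varepsilon\to0$). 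The reverse inequality, obtained by repairing the at most $rn$ bad orbit times of an $(n,\varepsilon,r,f)$-spanning set with a fixed cover of cardinality $N_\varepsilon$, is the Feldman--Ren style counting argument with the bookkeeping done correctly: the surgery costs a factor $2^{nH(r)}N_\varepsilon^{rn}e^{n\omega_\varphi(\varepsilon)+2rn\|\varphi\|_\infty}$, every exponent of which is $o(n)$ once $r\to0$ with $\varepsilon$ held fixed, and you rightly identify that reversing the two limits would let the term $r\log N_\varepsilon$ blow up. It is worth noting that the paper's Lemma \ref{lem:ineq}(2) contains the germ of your error estimate (the factor $e^{n\delta+2nrM}$ accounting for the at most $rn$ uncontrolled times), but that lemma compares separated with spanning sets at the same $r$ and so cannot bridge $P_r$ and $P$; your extra ingredient, which appears nowhere in the paper, is the enumeration of the bad index sets $B$ and their cover labels, and that combinatorial step is precisely what converts an $r$-spanning set into a genuine Bowen spanning set.
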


Based on the above results, we introduce the notion of topological $r$-pressure for a free semigroup action and give some properties of the notion in this paper. The main results of this paper are the following two theorems.

\begin{theorem}\label{11111}
Let $f_0, \dotsc, f_{m-1}$ be continuous transformations of a compact metric space $(X,d)$ to itself, and $\varphi\in C\left( X,\mathbb{R} \right)$. Then
\begin{equation}\label{them:topology1}
\lim_{r\to 0}P_r(f_0, f_1, \cdots, f_{m-1},\varphi)= P(f_0, f_1, \cdots, f_{m-1},\varphi),
\end{equation}
where $P(f_0, f_1, \cdots, f_{m-1},\varphi)$ denotes the topological pressure of $f_0, \cdots, f_{m-1}$ and $P_r(f_0, f_1, \cdots, f_{m-1},\varphi)$  denotes the topological $r$-pressure of $f_0, \cdots, f_{m-1}$.
\end{theorem}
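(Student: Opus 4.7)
The plan is to reduce the free semigroup statement to the single-map statement Theorem \ref{222} by passing to a skew-product. Let $\Sigma_m = \{0,1,\ldots,m-1\}^{\mathbb{N}}$ carry a compatible metric and let $\sigma$ be the left shift. Define the continuous map
\[
F : \Sigma_m \times X \longrightarrow \Sigma_m \times X, \qquad F(\omega,x) = (\sigma\omega,\, f_{\omega_0}(x)),
\]
equip the (compact) product $\Sigma_m \times X$ with a product metric $\tilde d$, and lift $\varphi \in C(X,\mathbb{R})$ to $\tilde\varphi(\omega,x) = \varphi(x)$. Since $F^i(\omega,x) = (\sigma^i\omega,\, f_{\omega_{i-1}}\circ\cdots\circ f_{\omega_0}(x))$, the Birkhoff sum $(S_n\tilde\varphi)(\omega,x)$ coincides with the potential sum along the word $\omega_0\cdots\omega_{n-1}$ evaluated at $x$, which is precisely the ingredient appearing in the definitions of both $P$ and $P_r$ for free semigroup actions.

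The heart of the argument is a pair of parallel bridge identities relating the skew-product to the semigroup action:
\[
P(F,\tilde\varphi) = P(f_0,\ldots,f_{m-1},\varphi) + \log m, \qquad P_r(F,\tilde\varphi) = P_r(f_0,\ldots,f_{m-1},\varphi) + \log m.
\]
The first is a Bufetov/Bi\'s--type identity, the pressure analogue of the entropy formula implicit in \cite{Zhu}; the second is the new $r$-version. Both follow from the same counting scheme: if $\tilde d$ is calibrated so that the first $n$ iterates of $(\omega,x)$ and $(\omega',x')$ being $\tilde d$-close forces $\omega_i = \omega'_i$ for $0 \le i \le n-1$, then every $(n,\varepsilon,r,F)$-spanning set splits, cylinder by cylinder over the $m^n$ length-$n$ words, into an $(n,\varepsilon,r)$-spanning set of $X$ for each composition $f_{\omega_{n-1}}\circ\cdots\circ f_{\omega_0}$; conversely, the per-word spanning sets reassemble into a global spanning set for $F$. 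Since the weights $e^{S_n\tilde\varphi}$ match the per-word weights $e^{S_n\varphi}$ exactly, the factor $m^n$ is the sole discrepancy, producing the $\log m$ shift after $\tfrac{1}{n}\log$ and $n\to\infty$.

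Granting these two identities, the conclusion is immediate: Theorem \ref{222} applied to the single continuous map $F$ on the compact metric space $\Sigma_m \times X$ with continuous potential $\tilde\varphi$ yields
\[
\lim_{r\to 0} P_r(F,\tilde\varphi) = P(F,\tilde\varphi),
\]
and subtracting $\log m$ from both sides gives \eqref{them:topology1}.

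The main obstacle is the $r$-version of the bridge identity. In the skew-product the ``proportion of times within $\varepsilon$'' is measured in the product metric $\tilde d$, whereas for the free semigroup action it is measured coordinatewise in $d$ along a chosen word. Reconciling these two notions on length-$n$ orbits requires choosing the metric on $\Sigma_m$ so that $\tilde d(F^i(\omega,x),F^i(\omega',x')) < \varepsilon$ for all $0 \le i \le n-1$ already forces $\omega_i = \omega'_i$ throughout; with this calibration the sets of ``bad'' indices for the two notions coincide, and the counting and weight comparisons proceed cleanly. After that, uniform continuity of $\tilde\varphi$ versus $\varphi$ and the $\varepsilon \to 0$ limit are routine, so the whole reduction ultimately rests on this metric-tuning step together with the single-map case.
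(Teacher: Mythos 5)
Your overall architecture matches the paper's: pass to the skew product $F(\omega,x)=(\sigma\omega,f_{\omega_0}(x))$ with the lifted potential $g(\omega,x)=\varphi(x)$, invoke the single-map result (Theorem \ref{222}) for $F$, and transfer back via $\log m$ bridge identities. However, there is a concrete gap in the bridge you assert. You claim the full two-sided identity $P_r(F,\tilde\varphi)=P_r(f_0,\ldots,f_{m-1},\varphi)+\log m$, justified by the statement that an $(n,\varepsilon,r,F)$-spanning set ``splits, cylinder by cylinder over the $m^n$ length-$n$ words.'' That splitting argument is exactly what fails in the $r$-setting: a point $(\omega,x)$ of an $(n,\varepsilon,r,F)$-spanning set is only required to shadow $(\omega',x')$ at a proportion $1-r$ of the times $0\le i\le n-1$, so the symbols $\omega'_i$ at the bad times (up to $rn$ of them) are unconstrained. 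A single spanning point can therefore cover points lying over exponentially many distinct cylinders (on the order of $m^{rn}$ of them), so the inequality $Q_n(F,g,\varepsilon,r)\ge m^n\,Q_n(f_0,\ldots,f_{m-1},\varphi,\varepsilon,r)$ --- i.e.\ the direction $P_r(F,\tilde\varphi)\ge P_r(f_0,\ldots,f_{m-1},\varphi)+\log m$ --- does not follow from your calibration of the metric, and is at best unproven. (The calibration you describe, forcing $\omega_i=\omega'_i$ for all $i$, is the correct mechanism only for the ordinary pressure, where all $n$ iterates must be $\varepsilon$-close.)

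The good news is that this direction is never needed, and the paper avoids it. The hard inequality is the other one, $P_r(F,\tilde\varphi)\le P_r(f_0,\ldots,f_{m-1},\varphi)+\log m$, which comes from reassembling per-word $r$-spanning sets into a global one for $F$ (your ``conversely'' clause; this is Lemma \ref{lem:skewright}, with a harmless constant $K(\varepsilon,r)=m^{2C(\varepsilon)}$ coming from pinning down finitely many extra symbol coordinates). Combined with Theorem \ref{222} for $F$ and the exact non-$r$ identity $P_D(F,g)=\log m+P_d(f_0,\ldots,f_{m-1},\varphi)$ of Theorem \ref{them:topology2}, it yields $\liminf_{r\to 0}P_r(f_0,\ldots,f_{m-1},\varphi)\ge P(f_0,\ldots,f_{m-1},\varphi)$. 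For the reverse bound you should simply use the trivial inequality $P_r(f_0,\ldots,f_{m-1},\varphi)\le P(f_0,\ldots,f_{m-1},\varphi)$ (Remark \ref{1234}), which gives $\limsup_{r\to 0}P_r\le P$ with no skew-product input at all. Replacing your unproven half of the $r$-bridge by this remark closes the gap and recovers exactly the paper's proof.
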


\begin{theorem}\label{0000}
Let $f_0, \dotsc, f_{m-1}$ be homeomorphisms of a compact metric space $(X,d)$ to itself, and  $\varphi\in C\left( X,\mathbb{R} \right)$. Then
\begin{equation}
P(f_0, \cdots, f_{m-1},\varphi) = P(f_0^{-1}, \cdots, f_{m-1}^{-1},\varphi).
\end{equation}
\end{theorem}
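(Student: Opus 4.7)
The plan is to reduce Theorem \ref{0000} to the classical inversion invariance of topological pressure for a single homeomorphism, transported across the skew-product bridge that the paper has already set up to prove Theorem \ref{11111}. The underlying idea is that the two free semigroup actions $(f_0,\dots,f_{m-1})$ and $(f_0^{-1},\dots,f_{m-1}^{-1})$ lift to skew-products on $\Sigma_m\times X$ that are topologically conjugate to each other via a natural flip of the symbolic factor, so their pressures coincide.

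First, let $\Sigma_m=\{0,1,\dots,m-1\}^{\mathbb{Z}}$ with shift map $\sigma$ (two-sided, so that the lifts are invertible), and define the skew-products
\[
F(\omega,x)=(\sigma\omega,f_{\omega_0}(x)),\qquad G(\omega,x)=(\sigma\omega,f_{\omega_0}^{-1}(x)),
\]
both of which are homeomorphisms of $\Sigma_m\times X$. Set $\tilde\varphi(\omega,x)=\varphi(x)$, a continuous function. By the skew-product representation of pressure for free semigroup actions developed earlier in the paper (the same bridge used to prove Theorem \ref{11111}), there is a constant $c$ (namely $\log m$) such that
\[
P(F,\tilde\varphi)=P(f_0,\dots,f_{m-1},\varphi)+c,\qquad P(G,\tilde\varphi)=P(f_0^{-1},\dots,f_{m-1}^{-1},\varphi)+c.
\]

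Next, because $F$ is a homeomorphism of the compact metric space $\Sigma_m\times X$, the classical Walters-type result for a single transformation yields $P(F,\tilde\varphi)=P(F^{-1},\tilde\varphi)$. A direct computation gives $F^{-1}(\omega,x)=(\sigma^{-1}\omega,f_{\omega_{-1}}^{-1}(x))$. I then construct a topological conjugacy between $G$ and $F^{-1}$: define the flip $\pi\colon\Sigma_m\to\Sigma_m$ by $(\pi\omega)_i=\omega_{-i-1}$, which satisfies $\pi\circ\sigma=\sigma^{-1}\circ\pi$, and set $\Phi=\pi\times\mathrm{id}_X$. Using $(\pi\omega)_{-1}=\omega_0$ one verifies $\Phi\circ G=F^{-1}\circ\Phi$, and moreover $\tilde\varphi\circ\Phi=\tilde\varphi$ because $\tilde\varphi$ is independent of $\omega$. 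Consequently $P(G,\tilde\varphi)=P(F^{-1},\tilde\varphi)=P(F,\tilde\varphi)$, and subtracting the common constant $c$ from both sides gives the desired equality.

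The main obstacle I anticipate is not in the conjugacy step, which is essentially bookkeeping, but in cleanly justifying the skew-product formula for the pressure on both sides — in particular ensuring that the version of the skew-product used to establish Theorem \ref{11111} is compatible with passing to a two-sided shift when the generators are homeomorphisms, and that the formula carries the same additive constant $c$ in both cases. Once this is in place, applying $P(F,\tilde\varphi)=P(F^{-1},\tilde\varphi)$ together with the conjugacy $\Phi$ completes the proof.
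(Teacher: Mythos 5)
Your proposal is correct, and its skeleton matches the paper's: both arguments hinge on the single-map identity $P(F,\tilde\varphi)=P(F^{-1},\tilde\varphi)$ for the homeomorphic skew-product $F$, combined with the formula $P_D(F,g)=\log m+P_d(f_0,\dots,f_{m-1},\varphi)$ of Theorem \ref{them:topology2}. Where you genuinely diverge is in how the inverse side is handled. The paper proves a separate statement (its Theorem \ref{33}) asserting $P_D(F^{-1},g)=\log m+P_d(f_0^{-1},\dots,f_{m-1}^{-1},\varphi)$ directly, via two counting lemmas comparing spanning and separated sets for $F^{-1}$ on $\Sigma_m\times X$ with those for the inverse generators on $X$ (the proofs of which it omits as ``similar to Lin et al.''). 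You instead introduce the second skew-product $G(\omega,x)=(\sigma\omega,f_{\omega_0}^{-1}(x))$, apply Theorem \ref{them:topology2} verbatim to the family $f_0^{-1},\dots,f_{m-1}^{-1}$ (legitimate, since these are continuous maps of a compact space), and then identify $P(G,\tilde\varphi)$ with $P(F^{-1},\tilde\varphi)$ through the explicit conjugacy $\Phi=\pi\times\mathrm{id}_X$ with $(\pi\omega)_i=\omega_{-i-1}$. Your conjugacy computation checks out: $(\pi\sigma\omega)_i=\omega_{-i}=(\sigma^{-1}\pi\omega)_i$ and $(\pi\omega)_{-1}=\omega_0$, so $\Phi\circ G=F^{-1}\circ\Phi$, $\pi$ is a homeomorphism of $\Sigma_m$, and $\tilde\varphi\circ\Phi=\tilde\varphi$, so conjugacy invariance of pressure gives exactly what you need. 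The payoff of your route is that it eliminates the need for the extra counting lemmas entirely, replacing them with a two-line symbolic computation and a single invocation of the already-proved Theorem \ref{them:topology2}; the cost is essentially nil. Your worry about ``passing to a two-sided shift'' is moot, since the paper's $\Sigma_m$ is two-sided from the outset and Theorem \ref{them:topology2} is stated in that setting with the constant $\log m$ independent of the generating family.
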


This paper is organized as follows. In section 2, we give some preliminaries. In section 3, we give the definition of topological $r$-pressure of free semigroup actions and give some properties of them. In section 4, we give the proof of Theorem 1.4. In section 5, we give the proof of Theorem 1.5.

\section{Preliminaries}

\subsection{Topological pressure of free semigroup actions}\

Let $F_m^+$ be the set of all finite words of symbols $0,1,\cdots,m-1$. For every $w \in F_m^+$, $|w|$ denotes the length of $w$, i.e., the number of symbols in $w$. If $w, w' \in F_m^+$, define $ww'$ be the word obtained by writing $w'$ to the right of $w$. With respect to this law of composition, $F_m^+$ is a free semigroup with $m$ generators. We write $w \leq w'$ if there exists a word $w''$ such that $w' = w''w$.

Denote  the set of all two-side infinite sequences of symbols $0,1,\ldots,m-1$ by $\Sigma_m$, that is, \[ \Sigma_m=\{\omega=(\cdots, \omega_{-1}, \overset{*}{\omega_0}, \omega_1, \cdots) : \omega_i=0,1,\ldots,m-1~for~all~integer~i\}.\]

A metric on $\Sigma_m$ is introduced by setting \[d'(\omega,\omega')=1/2^k,~where~k=\inf\{|n|:\omega_n\neq \omega_n' \}.\]

Obviously, $\Sigma_m$ is compact with respect to this metric. The Bernoulli shift $\sigma: \Sigma_m \rightarrow \Sigma_m$ is a homeomorphism of $\Sigma_m$ given by the formula: \[(\sigma \omega)_i=\omega_{i+1}.\]

Assume that $\omega \in \Sigma_m,w\in F_m^{+}$, $a,b$ are integers, and $a\leq b$. We write $\omega|_{[a,b]}=w$ if $w=\omega_a\omega_{a+1}\ldots\omega_{b-1}\omega_b$.

Suppose that a free semigroup with $m$ generators acts on $X$,  denote the maps corresponding to the generators by $f_0, \dotsc, f_{m-1}$, we assume that these maps are continuous. If $w\in F_m^+$, $w = w_k w_{k-1} \cdots w_1$ where $w_i \in \{0,1,\cdots, m-1\}$ for all $i=1,2,\cdots, k$, denote $f_w = f_{w_k}f_{w_{k-1}}\cdots f_{w_1}$. Obviously, $f_{ww'}= f_w f_{w'}$ for any $w,w' \in F_m^+$.

For every $w\in F_m^+$, define a metric $d_w$ on $X$ by
\[
d_w(x_1, x_2) = \max_{w' \leq w} d(f_{w'}(x_1), f_{w'}(x_2)), \forall x_1, x_2 \in X.
\]

Obviously, if $w \leq w'$, then $d_w(x_1,x_2) \leq d_{w'}(x_1,x_2)$  for all $x_1,~x_2\in X$.

For every $w\in F_m^+,~{w'}\leq w,~\varphi\in C\left( X,\mathbb{R} \right)$ and we denote $\sum _{w'\leq w}\varphi \left( f_{w'}x \right)$ by $\left( S_{w}\varphi \right) \left( x\right)$.

Let $\varepsilon>0,$ a subset $E$ of $X$ is said to be a $(w, \varepsilon, f_0, \ldots, f_{m-1})$-separated set of $X$ for any $x,y \in E$, $x \neq y$, implies $d_w(x,y) > \varepsilon$.

In \cite{Lin}, Lin et al defined
\begin{align*}
&Q_{w}^{s}\left(f_{0}, \ldots, f_{\mathrm{m}-1}, \varphi, \varepsilon\right)\\
=&\sup \left\{\sum_{x \in E} e^{\left(S_{w} \varphi\right)(x)} : E \text { is } a\left(w, \varepsilon, f_{0}, \ldots, f_{\mathrm{m}-1}\right)\text{-separated set of $X$}\right\},
\end{align*}
\[
Q_{n}^{s}( f_0, \ldots, f_{m-1},\varphi,\varepsilon) = \frac{1}{m^n} \sum_{|w|=n}Q_{w}^{s}( f_0, \ldots, f_{m-1},\varphi,\varepsilon).
\]

Let $\varepsilon>0,$ a subset $F$ of $X$ is called a $(w, \varepsilon, f_0, \ldots, f_{m-1})$-spanning set of $X$ if for every $x\in X$ there exists $y \in F$ such that $d_w(x,y) \leq \varepsilon$.

Let
\begin{align*}
&Q_{w}\left(f_{0}, \ldots, f_{\mathrm{m}-1}, \varphi, \varepsilon\right)\\
=&\inf \left\{\sum_{x \in F} e^{\left(S_{w} \varphi\right)(x)} :F \text { is } a\left(w, \varepsilon, f_{0}, \ldots, f_{\mathrm{m}-1}\right)\text{-spanning set of $X$}\right\}
\end{align*}
\[
Q_{n}( f_0, \ldots, f_{m-1},\varphi,\varepsilon) = \frac{1}{m^n} \sum_{|w|=n}Q_{w}( f_0, \ldots, f_{m-1},\varphi,\varepsilon),
\]
and defined the topological pressure of a  free semigroup action by the formula
\begin{align*}
P(f_0, \ldots, f_{m-1},\varphi) &= \lim_{\varepsilon \to 0} \limsup_{n \to \infty} \frac{1}{n} \log Q_{n}( f_0, \ldots, f_{m-1},\varphi,\varepsilon)\\
 &= \lim_{\varepsilon \to 0} \limsup_{n \to \infty} \frac{1}{n} \log Q_{n}^{s}( f_0, \ldots, f_{m-1},\varphi,\varepsilon).
\end{align*}

\begin{remark}
When $m =1$, it coincides with the topological pressure defined in \cite{Walters,Walters1}. Sometimes, in order to emphasize metric $d$, the topological pressure is always denoted as $P_{d}(f_{0},\ldots,f_{m-1},\varphi)$.
\end{remark}

\subsection{Skew-product transformation }\

Let $(X,d)$ be a compact metric space and suppose $f_0, \ldots, f_{m-1}$ are continuous mappings of $X$ to itself.

To this action, we assign the following a skew-product transformation. Its base is $\Sigma_m$, its fiber is $X$, and the maps $F: \Sigma_m \times X \to \Sigma_m \times X$ and  $g:\Sigma_m \times X \to \mathbb{R}$ are defined by the formula
\[
F(\omega, x) = (\sigma \omega, f_{\omega_0}(x)),
\]
\[
g(\omega, x)=\varphi(x),
\]
where $\omega = (\cdots, \omega_{-1}, \overset{*}{\omega_0}, \omega_1, \cdots)$, $\sigma $ is the Bernoulli shift and $\varphi\in C\left( X,\mathbb{R} \right)$.
Here $f_{\omega_0}$ stands for $f_0$ if $\omega_0=0$, and for $f_1$ if $\omega_0=1$, and so on. For $w=i_1\cdots i_k \in F_m^{+}$, denote $\overline{w}=i_k\cdots i_1$. Let $\omega= (\cdots, \omega_{-1}, \overset{*}{\omega_0}, \omega_1, \cdots)\in \Sigma_m$, then
\begin{align*}
F^n(\omega,x)&=(\sigma^n\omega,f_{\omega_{n-1}}f_{\omega_{n-2}}\cdots f_{\omega_0}(x))\\&=(\sigma ^n\omega,f_{\overline{\omega|_{[0,n-1]}}}(x)).
\end{align*}

Let $P(F,g)$ denote the topological pressure of $F$ with respect to $g$ for the dynamical system $(\Sigma_m \times X,F)$.

In \cite{Lin}, Lin et al proved the following  theorem.

\begin{theorem}\label{them:topology2}(\cite{Lin}, Theorem 1.1 )
The topological pressure of the skew product transformation $F$ with respect to $g$, satisfies
\[
P_{D}(F,g) =\log m +P_{d}(f_0, \cdots, f_{m-1},\varphi),
\]
where the metric $D$ on $\Sigma_m \times X$ is defined as
\[
D((\omega,x), (\omega', x')) = \max \left\{d'(\omega, \omega'),d(x, x')\right\}
\]
and the metric $d'$ on $\Sigma_m$ is introduced by setting $d'(\omega,\omega')=1/2^k$, and $k=\inf\{|n|:\omega_n\neq\omega_n^{'} \}$.
\end{theorem}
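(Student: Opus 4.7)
The plan is to lift the identity to the skew products via Theorem \ref{them:topology2}, and then use a carefully chosen topological conjugacy together with the classical Walters identity $P(T,\psi)=P(T^{-1},\psi)$ for a single homeomorphism on a compact metric space. Introduce the skew product $F(\omega,x)=(\sigma\omega,f_{\omega_0}(x))$ associated with $(f_0,\dotsc,f_{m-1})$, and the analogous skew product $F^{*}(\omega,x)=(\sigma\omega,f_{\omega_0}^{-1}(x))$ built from the inverses, both carrying the same potential $g(\omega,x)=\varphi(x)$. Applying Theorem \ref{them:topology2} to each system gives
\begin{equation*}
P_D(F,g)=\log m+P_d(f_0,\dotsc,f_{m-1},\varphi),\qquad P_D(F^{*},g)=\log m+P_d(f_0^{-1},\dotsc,f_{m-1}^{-1},\varphi),
\end{equation*}
so the theorem reduces to proving $P_D(F,g)=P_D(F^{*},g)$.

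The crucial subtlety is that $F^{*}$ is \emph{not} the inverse of $F$. Since every $f_i$ is a homeomorphism, $F$ is a homeomorphism of $\Sigma_m\times X$, and unwinding $F(\omega,x)=(\sigma\omega,f_{\omega_0}(x))$ yields $F^{-1}(\omega,x)=(\sigma^{-1}\omega,f_{\omega_{-1}}^{-1}(x))$, in which the symbol read is $\omega_{-1}$ rather than $\omega_{0}$. To bridge $F^{*}$ and $F^{-1}$ I use the involution $\tau:\Sigma_m\to\Sigma_m$ defined by $(\tau\omega)_n=\omega_{-n-1}$; it is a homeomorphism of $\Sigma_m$ satisfying $\tau\sigma\tau=\sigma^{-1}$ and $(\tau\omega)_0=\omega_{-1}$. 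Setting $\Phi(\omega,x)=(\tau\omega,x)$, a line-by-line coordinate check gives $\Phi\circ F^{*}=F^{-1}\circ\Phi$, so $\Phi$ is a topological conjugacy from $(\Sigma_m\times X,F^{*})$ to $(\Sigma_m\times X,F^{-1})$. Since $g$ depends only on the $X$-coordinate, $g\circ\Phi=g$, and the conjugacy invariance of topological pressure yields $P_D(F^{*},g)=P_D(F^{-1},g)$.

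Finally I would invoke the classical Walters identity: for any homeomorphism $T$ of a compact metric space $Y$ and any $\psi\in C(Y,\mathbb{R})$, $P(T,\psi)=P(T^{-1},\psi)$. The proof is a direct verification that an $(n,\varepsilon)$-spanning set for $T$ pushed forward by $T^{n-1}$ is an $(n,\varepsilon)$-spanning set for $T^{-1}$, together with the reindexing identity $\sum_{i=0}^{n-1}\psi(T^i x)=\sum_{i=0}^{n-1}\psi(T^{-i}(T^{n-1}x))$. Applied to the homeomorphism $F$ of the compact metric space $\Sigma_m\times X$ with the potential $g$, this yields $P_D(F,g)=P_D(F^{-1},g)$, and chaining with the conjugacy step completes $P_D(F,g)=P_D(F^{*},g)$, from which Theorem \ref{them:topology2} delivers the claim.

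The main obstacle I anticipate is the bookkeeping that distinguishes $F^{-1}$ from $F^{*}$: the two differ precisely in whether the base shift reads the symbol at position $-1$ or $0$, so confusing them would give an identity conjugacy and a vacuous argument. Once $\tau$ is in place and the identity $(\tau\omega)_0=\omega_{-1}$ is used explicitly, the conjugacy relation $\Phi F^{*}=F^{-1}\Phi$ and the $\Phi$-invariance of $g$ fall out immediately, and the rest of the argument is mechanical.
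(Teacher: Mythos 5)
Your proposal does not prove the statement at hand; it proves a different theorem while assuming the statement as a known ingredient. The statement to be established is the pressure formula for the skew product itself, $P_D(F,g)=\log m+P_d(f_0,\dotsc,f_{m-1},\varphi)$. Your argument invokes exactly this formula twice (once for $F$ and once for the skew product $F^{*}$ built from the inverses) and then deduces $P_d(f_0,\dotsc,f_{m-1},\varphi)=P_d(f_0^{-1},\dotsc,f_{m-1}^{-1},\varphi)$, which is Theorem \ref{0000} of the paper, not Theorem \ref{them:topology2}. As a proof of Theorem \ref{them:topology2} this is circular: nothing in the proposal supplies the actual content of the formula relating the pressure of $F$ on $\Sigma_m\times X$ to the pressure of the free semigroup action on $X$.

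What is actually required is a two-sided comparison between the spanning/separated-set quantities for $F$ on $\Sigma_m\times X$ and the word-averaged quantities $Q_n(f_0,\dotsc,f_{m-1},\varphi,\varepsilon)=\frac{1}{m^n}\sum_{|w|=n}Q_w(f_0,\dotsc,f_{m-1},\varphi,\varepsilon)$ on $X$. For the upper bound one picks, for each of the $m^{n+2C(\varepsilon)}$ words of length $n+2C(\varepsilon)$, a point $\omega(i)$ in the corresponding cylinder and combines it with a spanning set of $X$ adapted to the word $\bar w_i'=\overline{\omega(i)|_{[0,n-1]}}$; the resulting points form an $(n,\varepsilon)$-spanning set of $\Sigma_m\times X$ and give $Q_n(F,g,\varepsilon)\le K(\varepsilon)\,m^n\,Q_n(f_0,\dotsc,f_{m-1},\varphi,\varepsilon)$ (this is the pattern of Lemma \ref{lem:skewright} and of Lemmas \ref{lem:skewleft2}--\ref{lem:skewright2}). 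For the lower bound one observes that for $\varepsilon\le 1/2$ points lying in distinct cylinders of length $n$ are automatically $(n,\varepsilon)$-separated in the base, so a union over all $m^n$ words $w$ of $(w,\varepsilon)$-separated sets of $X$, each placed in its own cylinder, is $(n,\varepsilon)$-separated for $F$, giving $Q_n^s(F,g,\varepsilon)\ge m^n\,Q_n^s(f_0,\dotsc,f_{m-1},\varphi,\varepsilon)$. Taking logarithms, dividing by $n$, and letting $\varepsilon\to 0$ yields the formula. None of this appears in your proposal. (Your $\tau$-conjugacy computation is correct, and it is a legitimate alternative route to Theorem \ref{0000}, where the paper instead handles $F^{-1}$ directly via Theorem \ref{33}; but that is a different statement from the one you were asked to prove.)
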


\section{topological $R$-pressure of free semigroup actions}
In this section, we introduce the definition of  topological $r$-pressure of free semigroup actions and give some properties of the concept.

Let $(X, d)$ be a compact  metric space and $f_0, f_1, \cdots, f_{m-1}$ are continuous maps from $X$ into itself. For $x \in X$, $ w\in F_m^+$, $\varepsilon > 0$ and $0<r<1$, let
\begin{align*}
&B(x,w,\varepsilon, r,f_0 \ldots f_{m-1})\\
=&\left\{y\in X:\frac{1}{|w|}Card\{w':d(f_{w'} x ,f_{w'} y ) < \varepsilon , and~w' \leq w\}> 1-r\right\}.
\end{align*}


A subset $F$ of $X$ is called a $(w,\varepsilon, r,f_0 \ldots f_{m-1})$-spanning set of $X$, if for every $x\in X$, there exists $y \in F$ such that $x\in B(y,w,\varepsilon, r,f_0 \ldots f_{m-1}).$

Let
\begin{align*}
&Q_{w}\left(f_{0}, \ldots, f_{\mathrm{m}-1}, \varphi, \varepsilon,r\right)\\
=&\inf \left\{\sum_{x \in F} e^{\left(S_{w} \varphi\right)(x)} : F \text { is a} \left(w, \varepsilon,r, f_{0}, \ldots, f_{\mathrm{m}-1}\right)\text {-spanning set of}~X \right\},
\end{align*}
\[
Q_{n}( f_0, \ldots, f_{m-1},\varphi,\varepsilon,r) = \frac{1}{m^n} \sum_{|w|=n}Q_{w}( f_0, \ldots, f_{m-1},\varphi,\varepsilon,r).
\]

\begin{remark}
Let $r_{w}(f_0, \ldots, f_{m-1},\varepsilon,r)$ denote the smallest cardinality of any\\
$(w,\varepsilon,r,f_0, \ldots, f_{m-1})$-spanning set of $X$. Then let
\[
r_{n}( f_0, \ldots, f_{m-1},\varepsilon,r) = \frac{1}{m^n} \sum_{|w|=n}r_{w}( f_0, \ldots, f_{m-1},\varepsilon,r).
\]

$\left( 1\right)$ If $\varepsilon_{1}< \varepsilon_{2}$ then $Q_{w}( f_0, \ldots, f_{m-1},\varphi,\varepsilon_{1},r)\geq Q_{w}( f_0, \ldots, f_{m-1},\varphi,\varepsilon_{2},r)$. Hence,
\[
Q_{n}( f_0, \ldots, f_{m-1},\varphi,\varepsilon_{1},r)\geq Q_{n}( f_0, \ldots, f_{m-1},\varphi,\varepsilon_{2},r).
\]

$\left( 2\right)$ $0<Q_{w}( f_0, \ldots, f_{m-1},\varphi,\varepsilon,r)\leq\parallel e^{S_{w} \varphi}\parallel r_{w}(f_0, \ldots, f_{m-1},\varepsilon,r)$. Hence,
\[
0<Q_{n}( f_0, \ldots, f_{m-1},\varphi,\varepsilon,r)\leq e^{n\parallel\varphi\parallel} r_{n}(f_0, \ldots, f_{m-1},\varepsilon,r).
\]

$\left( 3\right)$ $Q_{w}( f_0, \ldots, f_{m-1},0,\varepsilon,r)=r_{w}(f_0, \ldots, f_{m-1},\varepsilon,r)$. Hence,
\[
Q_{n}( f_0, \ldots, f_{m-1},0,\varepsilon,r)=r_{n}(f_0, \ldots, f_{m-1},\varepsilon,r).
\]

\end{remark}

\begin{definition}
For $0<r<1,$ $\varphi\in C\left( X, \mathbb{R} \right)$, the topological $r$-pressure of free semigroup actions is  defined by the formula
\[
P_r(f_0, \ldots, f_{m-1},\varphi) =\lim_{\varepsilon \to 0} \limsup_{n \to \infty} \frac{1}{n} \log Q_{n}( f_0, \ldots, f_{m-1},\varphi,\varepsilon,r).
\]
\end{definition}
We sometimes  write $P_{r,d}(f_0, \ldots, f_{m-1},\varphi)$ to emphasise the dependence on  $d$.

\begin{remark}\label{1234}
 It's obvious  that $P(f_0, \ldots, f_{m-1},\varphi)\geq P_r(f_0, \ldots, f_{m-1},\varphi).$ If $m =1$, it coincides with the topological $r$-pressure defined in \cite{C} and if $\varphi =0$, it coincides with the topological $r$-entropy of free semigroup actions defined in \cite{Zhu}.
\end{remark}

Now we make a brief introduction to the  separated set.

A subset $E$ of $X$ is called a $(w, \varepsilon, r,f_0,\ldots, f_{m-1})$-separated set of $X$ for any $x,y \in E$, $x \neq y$, implies
\[
\frac{1}{|w|}Card\{w':d(f_{w'} x ,f_{w'} y ) \geq \varepsilon, and~w' \leq w\}> r.
\]

Put
\begin{align*}
&Q_{w}^s\left(f_{0}, \ldots, f_{\mathrm{m}-1}, \varphi, \varepsilon,r\right)\\
=&\sup \left\{\sum_{x \in E} e^{\left(S_{w} \varphi\right)(x)} : E \text { is a} \left(w, \varepsilon,r, f_{0}, \ldots, f_{\mathrm{m}-1}\right)\text {-separated set of $X$} \right\}
\end{align*}
\[
Q_{n}^s( f_0, \ldots, f_{m-1},\varphi,\varepsilon,r) = \frac{1}{m^n} \sum_{|w|=n}Q_{w}^{s}( f_0, \ldots, f_{m-1},\varphi,\varepsilon,r).
\]

\begin{remark}
If $\varepsilon_{1}<\varepsilon_{2}$ then $Q_{w}^s( f_0, \ldots, f_{m-1},\varphi,\varepsilon_{1},r)\geq Q_{w}^s( f_0, \ldots, f_{m-1},\varphi,\varepsilon_{2},r)$. Hence,
\[
Q_{n}^s( f_0, \ldots, f_{m-1},\varphi,\varepsilon_{1},r)\geq Q_{n}^s( f_0, \ldots, f_{m-1},\varphi,\varepsilon_{2},r).
\]
\end{remark}

Put
\[
P^s_r(f_0, \ldots, f_{m-1},\varphi) = \lim_{\varepsilon \to 0}\limsup_{n \to \infty} \frac{1}{n} \log Q_{n}^s( f_0, \ldots, f_{m-1},\varphi,\varepsilon,r).
\]

We sometimes  write $P^s_{r,d}(f_0, \ldots, f_{m-1},\varphi)$ to emphasise the dependence on  $d$.

\begin{lemma}\label{lem:ineq}
For any $n\geq1$, $\varepsilon > 0$ and $0<r<1$, we have

$\left( 1\right)~Q_{n}(f_0, \ldots, f_{m-1},\varphi,\varepsilon,r,)\leq Q_{n}^s(f_0, \ldots, f_{m-1},\varphi,\varepsilon,r).$

$\left( 2\right)$ Let $M=\max\limits_{x\in X}{|\varphi(x)|}$, and if $\delta=\sup\{|\varphi(x)-\varphi(y)|:d(x,y)\leq\varepsilon\}$, then
\[
Q_{n}^s(f_0, \ldots, f_{m-1},\varphi,2\varepsilon,2r)\leq e^{n\delta+2nrM}Q_{n}(f_0, \ldots, f_{m-1},\varphi,\varepsilon,r).
\]
\end{lemma}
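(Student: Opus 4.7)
The plan is to establish both inequalities $w$-by-$w$ for each $w$ with $|w|=n$, and then average over $|w|=n$. This mirrors the classical Walters--Bowen comparison between separated and spanning sets, adapted to the $r$-tolerance setting and to the free semigroup framework.

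For (1), I would fix $w$ with $|w|=n$ and take a maximal (with respect to inclusion) $(w,\varepsilon,r,f_0,\ldots,f_{m-1})$-separated set $E$. Maximality forces $E$ to be a $(w,\varepsilon,r,f_0,\ldots,f_{m-1})$-spanning set: for any $x\notin E$ the set $E\cup\{x\}$ fails to be separated, and since $E$ itself is separated, the failure must lie between $x$ and some $y\in E$; rewriting this failure in complementary form yields $x\in B(y,w,\varepsilon,r,f_0\ldots f_{m-1})$ (up to a harmless boundary case between $>$ and $\geq$, absorbed by a standard perturbation of $r$ or $\varepsilon$). Hence $Q_w(\ldots,\varepsilon,r)\leq \sum_{x\in E}e^{(S_w\varphi)(x)}$, and since $E$ may be chosen so that this sum approaches $Q_w^s(\ldots,\varepsilon,r)$, I conclude $Q_w\leq Q_w^s$. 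Averaging over $|w|=n$ gives (1).

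For (2), fix $w$, a $(w,2\varepsilon,2r)$-separated set $E$, and a $(w,\varepsilon,r)$-spanning set $F$. For each $x\in E$ choose $\phi(x)\in F$ with $x\in B(\phi(x),w,\varepsilon,r,\ldots)$; existence is just the spanning property. The key claim is that $\phi$ is injective. If $\phi(x_1)=\phi(x_2)=y$, write $A_i=\{w'\leq w:d(f_{w'}x_i,f_{w'}y)<\varepsilon\}$, so that $|A_i|/|w|>1-r$ for $i=1,2$; a union bound gives $|A_1\cap A_2|/|w|>1-2r$, and on $A_1\cap A_2$ the triangle inequality yields $d(f_{w'}x_1,f_{w'}x_2)<2\varepsilon$, contradicting the $(2\varepsilon,2r)$-separation of $x_1\ne x_2$.

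With injectivity in hand, I would set $A_x=\{w'\leq w:d(f_{w'}x,f_{w'}\phi(x))<\varepsilon\}$ and split the difference $(S_w\varphi)(x)-(S_w\varphi)(\phi(x))$ along $A_x$ and its complement: on $A_x$ each term is bounded by $\delta$ by definition of $\delta$, while the complement has cardinality less than $rn$, on which I use the crude bound $2M$. This gives $(S_w\varphi)(x)\leq (S_w\varphi)(\phi(x))+n\delta+2nrM$, hence, by injectivity of $\phi$,
\[
\sum_{x\in E}e^{(S_w\varphi)(x)}\leq e^{n\delta+2nrM}\sum_{y\in F}e^{(S_w\varphi)(y)}.
\]
Taking the supremum over $E$ and the infimum over $F$ gives $Q_w^s(\ldots,2\varepsilon,2r)\leq e^{n\delta+2nrM}Q_w(\ldots,\varepsilon,r)$; averaging over $|w|=n$ yields (2). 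The main obstacle is the injectivity step, which rests on the strictness of the defining inequalities for separation and for $B(\cdot)$, propagated through the inclusion--exclusion estimate; everything else is routine manipulation of exponential sums in the Bowen--Walters style.
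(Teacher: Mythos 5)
Your proposal is correct and follows essentially the same route as the paper: part (1) via the maximality-implies-spanning argument for separated sets, and part (2) via the injective map $\phi$ from a $(w,2\varepsilon,2r)$-separated set into a $(w,\varepsilon,r)$-spanning set together with the split of $(S_w\varphi)(x)-(S_w\varphi)(\phi(x))$ along $A_x$ and its complement, yielding the same bound $n\delta+2nrM$. The only differences are cosmetic: you supply the injectivity argument explicitly (the paper cites \cite{Zhu}) and you bound each term of the exponential sum directly rather than extracting a minimizing point $z$ as the paper does, and you are if anything more careful about the strict-versus-nonstrict boundary case in (1), which the paper passes over silently.
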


\begin{proof}

$\left( 1\right)$ Let $E$ be a $(w,\varepsilon,r,f_0, \ldots, f_{m-1})$-separated set of $X$ of maximum cardinality, then $E$ is a $(w,\varepsilon,r,f_0, \ldots, f_{m-1})$-spanning set of $X$.

Suppose $E$ is not a $(w,\varepsilon,r,f_0, \ldots, f_{m-1})$-spanning set, then there exists at least one point $x\in X$ such that for all $y\in E$, we have
\[
\frac{1}{|w|}Card\{w':d(f_{w'} x ,f_{w'} y ) <\varepsilon, and~w' \leq w\}<1-r,
\]
that is
\[
\frac{1}{|w|}Card\{w':d(f_{w'} x ,f_{w'} y )\geq \varepsilon, and~w' \leq w\}> r,
\]
which contradicts  the separated set of maximum cardinality.

So we obtain that $(w,\varepsilon,r,f_0, \ldots, f_{m-1})$-separated set which cannot be enlarge to a $(w,\varepsilon,r,f_0, \ldots, f_{m-1})$-separated set must be a $(w,\varepsilon,r,f_0,\ldots, f_{m-1})$-spanning set of $X$. Therefore we have
\[
Q_{w}(f_0, \ldots, f_{m-1},\varphi,\varepsilon,r)\leq Q_{w}^s(f_0,\ldots, f_{m-1},\varphi,\varepsilon,r).
\]

Moreover,
\[
Q_{n}(f_0, \ldots, f_{m-1},\varphi,\varepsilon,r)\leq Q_{n}^s(f_0,\ldots, f_{m-1},\varphi,\varepsilon,r).
\]

$\left( 2\right)$ To show the other lemma, suppose $E$ is a $(w,2\varepsilon,2r,f_0, \ldots, f_{m-1})$-separated set of $X$ and
 $F$ is a $(w,\varepsilon,r,f_0, \ldots, f_{m-1})$-spanning set of $X$.
Define $\phi$ : $E\rightarrow F$  by choosing, for each $x\in E$, some point $\phi(x)\in F$ with $x\in B(\phi(x),w,\varepsilon, r,f_0, \ldots,f_{m-1})$.
Then $\phi(x)$ is injective (details are given in \cite{Zhu}).
Let $z\in E$,~such that
\[
S_{w}\varphi(\phi(z))-S_{w}\varphi(z)=\min_{x\in E}\{S_{w}\varphi(\phi(x))-S_{w}\varphi(x)\}.
\]

Let $A=\{w': d(f_{w'}z, f_{w'}\phi(z))<\varepsilon, w'\leq w\}$, then $Card A >|w|(1-r)$.
Denote $Card A =|w|(1-r)\cdot k$, $1<k\leq\frac{1}{1-r}$.
Then
\begin{align*}
\sum _{y \in F} e^{S_{w} \varphi(y)}\geq \sum _{y \in \phi E} e^{S_{w} \varphi(y)}&=\sum _{x \in E} e^{S_{w} \varphi(\phi(x))-S_{w}\varphi(x)}e^{S_{w} \varphi(x)}\\
&\geq \min _{x \in E} \{e^{{S_{w} \varphi(\phi(x))-S_{w}\varphi(x)}}\}\sum _{x \in E} e^{S_{w} \varphi(x)}\\
&=e^{{S_{w} \varphi(\phi(z))-S_{w}\varphi(z)}}\sum _{x \in E} e^{S_{w} \varphi(z)}\\
&\geq e^{-|w|(1-r)k\delta-|w|(1-(1-r)\cdot k)\cdot 2M}\sum _{x \in E} e^{S_{w} \varphi(x)}\\
&\geq e^{-|w|\delta-|w|r\cdot 2M}\sum _{x \in E} e^{S_{w} \varphi(x)}.
\end{align*}

Therefore,
\[
Q_{w}^s(f_0, \ldots, f_{m-1},\varphi,2\varepsilon,2r)\leq e^{|w|\delta+2|w|rM}Q_{w}(f_0, \ldots, f_{m-1},\varphi,\varepsilon,r).
\]

Hence,
\[
Q_{n}^s(f_0, \ldots, f_{m-1},\varphi,2\varepsilon,2r)\leq e^{n\delta+2nrM}Q_{n}(f_0, \ldots, f_{m-1},\varphi,\varepsilon,r).
\]
\end{proof}

\begin{theorem}\label{11}
Let $(X, d)$ be a compact metric space and $f_0, f_1, \ldots, f_{m-1}$ are continuous maps  from $X$ into itself, and $\varphi\in C\left( X,\mathbb{R} \right)$, then
\[
P^s_{2r}(f_0, f_1, \ldots, f_{m-1},\varphi)-2Mr\leq P_{r}(f_0, f_1, \ldots, f_{m-1},\varphi)\leq P^s_r(f_0, f_1, \ldots, f_{m-1},\varphi).
\]
\end{theorem}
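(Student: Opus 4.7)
The plan is to obtain both inequalities as direct consequences of Lemma 3.5 by taking $\frac{1}{n}\log(\cdot)$, passing to $\limsup_{n\to\infty}$, and then sending $\varepsilon\to 0$. Since the two parts of Lemma 3.5 already compare $Q_n$ with $Q_n^s$ at matched or rescaled parameters $(\varepsilon,r)$, the theorem amounts to tracking how these comparisons survive the two limits appearing in the definitions of $P_r$ and $P_r^s$.

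For the right-hand inequality $P_{r}\le P^s_{r}$, I would start from part (1) of Lemma 3.5, namely $Q_{n}(f_0,\ldots,f_{m-1},\varphi,\varepsilon,r)\le Q_{n}^s(f_0,\ldots,f_{m-1},\varphi,\varepsilon,r)$. Taking logarithm, dividing by $n$, then applying $\limsup_{n\to\infty}$ preserves the inequality, and finally letting $\varepsilon\to 0$ yields $P_{r}(f_0,\ldots,f_{m-1},\varphi)\le P^s_{r}(f_0,\ldots,f_{m-1},\varphi)$. This half is essentially immediate.

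For the left-hand inequality, I would apply part (2) of Lemma 3.5:
\[
Q_{n}^s(f_0,\ldots,f_{m-1},\varphi,2\varepsilon,2r)\le e^{n\delta+2nrM}\,Q_{n}(f_0,\ldots,f_{m-1},\varphi,\varepsilon,r),
\]
where $\delta=\sup\{|\varphi(x)-\varphi(y)|:d(x,y)\le \varepsilon\}$. Taking $\frac{1}{n}\log$ and then $\limsup_{n\to\infty}$ gives
\[
\limsup_{n\to\infty}\tfrac{1}{n}\log Q_{n}^s(\cdot,2\varepsilon,2r)\le \delta+2rM+\limsup_{n\to\infty}\tfrac{1}{n}\log Q_{n}(\cdot,\varepsilon,r).
\]
Now I would let $\varepsilon\to 0$; the key point is that $\varphi$ is continuous on the compact space $X$, hence uniformly continuous, so $\delta\to 0$. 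The $2\varepsilon$ on the left tends to $0$ at the same rate, so the left-hand side converges to $P^s_{2r}(f_0,\ldots,f_{m-1},\varphi)$, while the right-hand side converges to $2rM+P_{r}(f_0,\ldots,f_{m-1},\varphi)$. Rearranging yields the claimed lower bound.

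There is no real obstacle here beyond being careful about the order of limits, since the hard work has already been done inside Lemma 3.5 (in particular the injectivity of the map $\phi:E\to F$ and the optimisation over the minimiser $z\in E$). The only minor point to verify is that $\delta=\delta(\varepsilon)\to 0$ as $\varepsilon\to 0$ so that the $e^{n\delta}$ factor contributes nothing in the limit; this uses only uniform continuity of $\varphi$ on the compact metric space $(X,d)$.
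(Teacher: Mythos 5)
Your proposal is correct and follows exactly the route the paper intends: the paper's own proof of this theorem consists of the single sentence that Lemma \ref{lem:ineq} implies the result, and your write-up simply supplies the routine details (taking $\tfrac{1}{n}\log$, passing to $\limsup_{n\to\infty}$, and using uniform continuity of $\varphi$ to get $\delta(\varepsilon)\to 0$ as $\varepsilon\to 0$). No further comment is needed.
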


\begin{proof}
It is clear that  Lemma \ref{lem:ineq} implies Theorem  \ref{11}.
\end{proof}


\begin{remark}\label{separe top=top}
If $m=1$ we have $\lim\limits_{r \to 0}{P^s_r(f,\varphi)}=\lim\limits_{r \to 0}{P_r(f,\varphi)}=P(f,\varphi)$ by the Theorem \ref{222} and Theorem \ref{11}.
\end{remark}

Now we study the properties of $P_{r}(f_{0},f_{1},\cdots,f_{m-1},\cdot)$ and $P_{r}^{s}(f_{0},f_{1},\cdots,f_{m-1},\cdot)$.
\begin{theorem}
Let $f_{i}:X\rightarrow X$ be continuous transformations of a compact metric space $(X,d),i=0,1,\cdots,m-1$. If $\varphi,\psi \in C(X,\mathbb{R}),~\varepsilon>0,~0<r<1$ and $c \in \mathbb{R}$, then the following are true. \\
$\left( 1\right)$ $P_{r}(f_{0},f_{1},\cdots,f_{m-1},0)=h_{r}(f_{0},f_{1},\cdots,f_{m-1}).$\\
$\left( 2\right)$ $\varphi \leq \psi$ implies  $P_{r}(f_{0},f_{1},\cdots,f_{m-1},\varphi)\leq P_{r}(f_{0},f_{1},\cdots,f_{m-1},\psi)$. In particular, $h_{r}(f_{0},f_{1},\cdots,f_{m-1})+\inf \varphi \leq P_{r}(f_{0},f_{1},\cdots,f_{m-1},\varphi) \leq h_{r}(f_{0},f_{1},\cdots,f_{m-1})+\sup \varphi.$\\
$\left( 3\right)$  $P_{r}(f_{0},f_{1},\cdots,f_{m-1},\cdot)$ is either finite-valued or constantly $\infty$.\\
$\left( 4\right)$ $|P_{r}^{s}(f_{0},f_{1},\cdots,f_{m-1},\varphi,\varepsilon)-P_{r}^{s}(f_{0},f_{1},\cdots,f_{m-1},\psi,\varepsilon)|\leq\|\varphi-\psi\|$ and so if $P_{r}^{s}(f_{0},f_{1},\cdots,f_{m-1},\cdot) <\infty$, then $|P_{r}^{s}(f_{0},f_{1},\cdots,f_{m-1},\varphi)-P_{r}^{s}(f_{0},f_{1},\cdots,f_{m-1},\psi)| \\\leq \|\varphi-\psi\|.$ \\
$\left( 5\right)$ $P_{r}^{s}(f_{0},f_{1},\cdots,f_{m-1},\cdot,\varepsilon)$ is convex, and so if $P_{r}^{s}(f_{0},f_{1},\cdots,f_{m-1},\cdot) <\infty$, then $P_{r}^{s}(f_{0},f_{1},\cdots,f_{m-1},\cdot)$ is convex.\\
$\left( 6\right)$ $P_{r}(f_{0},f_{1},\cdots,f_{m-1},\varphi+c)=P_{r}(f_{0},f_{1},\cdots,f_{m-1},\varphi)+c$.\\
$\left( 7\right)$ $P_{r}^{s}(f_{0},f_{1},\cdots,f_{m-1},\varphi+\psi)\leq P_{r}^{s}(f_{0},f_{1},\cdots,f_{m-1},\varphi)+P_{r}^{s}(f_{0},f_{1},\cdots,f_{m-1},\psi)+\log m$.\\
$\left( 8\right)$ $P_{r}^{s}(f_{0},f_{1},\cdots,f_{m-1},c \varphi)\leq c P_{r}^{s}(f_{0},f_{1},\cdots,f_{m-1},\varphi)+(c-1)\log m$ if $c \geq 1$ and $P_{r}^{s}(f_{0},f_{1},\cdots,f_{m-1},c \varphi)\geq c P_{r}^{s}(f_{0},f_{1},\cdots,f_{m-1},\varphi)+(c-1)\log m$ if $c \leq 1$ .\\
$\left( 9\right)$ $-2 \log m - P_{r}^{s}(f_{0},f_{1},\cdots,f_{m-1},|\varphi|) \leq P_{r}^{s}(f_{0},f_{1},\cdots,f_{m-1},\varphi) \leq P_{r}^{s}(f_{0},f_{1},\cdots,\\f_{m-1},|\varphi|).$\\
\end{theorem}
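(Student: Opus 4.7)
The theorem is a compendium of nine standard properties of pressure, each of which transfers from fibre-level inequalities on $Q_w$ or $Q_w^s$ to the averaged level ($Q_n=m^{-n}\sum_{|w|=n}Q_w$) and finally to the limits $P_r,P_r^s$ by the routine $(1/n)\log$, $\limsup$, $\varepsilon\to 0$ procedure. The plan is to prove each item by the shortest available word-level estimate, reusing earlier items wherever convenient; the effort is focused on making the correct double estimate, once inside a spanning/separated set and once across the $m^n$ words $|w|=n$. The $\log m$ corrections in items (7)--(9) arise precisely from the step in which an $\ell^p$-inequality across $|w|=n$ is not balanced in exponents and the $m^n$ prefactor leaves a residue.

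Items (1)--(6) require nothing beyond definition-chasing and standard identities. Item (1) follows from $e^{(S_w 0)(x)}\equiv 1$ combined with Remark 3.2(3). Item (6) follows from $(S_w(\varphi+c))(x)=(S_w\varphi)(x)+c|w|$, which factors $e^{c|w|}$ out of $Q_w$. Item (2) uses the pointwise monotonicity $e^{(S_w\varphi)(x)}\leq e^{(S_w\psi)(x)}$ preserved under $\sum$, $\inf$, averaging and $\limsup$; the ``in particular'' clause is the special case of $\varphi$ compared with the constants $\inf\varphi,\sup\varphi$ combined with (1) and (6). Item (3) is a formal consequence of (2) and (6): if $P_r(\varphi_0)<\infty$ then $P_r(\psi)\leq P_r(\varphi_0)+\|\psi-\varphi_0\|<\infty$ for every $\psi$. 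Item (4) uses $|(S_w\varphi)(x)-(S_w\psi)(x)|\leq|w|\,\|\varphi-\psi\|$, giving $Q_w^s(\varphi)\leq e^{|w|\|\varphi-\psi\|}Q_w^s(\psi)$ and hence the Lipschitz bound after averaging. Item (5) applies H\"older twice: first on each separated set $E$,
\[
\sum_{x\in E}e^{(S_w(t\varphi+(1-t)\psi))(x)}\leq\Bigl(\sum_{x\in E}e^{(S_w\varphi)(x)}\Bigr)^{t}\Bigl(\sum_{x\in E}e^{(S_w\psi)(x)}\Bigr)^{1-t},
\]
giving $Q_w^s(t\varphi+(1-t)\psi)\leq Q_w^s(\varphi)^tQ_w^s(\psi)^{1-t}$, and then on $\sum_{|w|=n}$ with the same exponents, after which the factor $m^n$ cancels exactly because $t+(1-t)=1$; convexity of $P_r^s(\cdot,\varepsilon)$ and of $P_r^s(\cdot)$ follows.

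Items (7)--(9) carry the $\log m$ correction. For (8) with $c\geq 1$, the inequality $\sum_i a_i^c\leq(\sum_i a_i)^c$ applied to $a_i=e^{(S_w\varphi)(x_i)}$ gives $Q_w^s(c\varphi)\leq Q_w^s(\varphi)^c$; applied again to $\sum_{|w|=n}Q_w^s(\varphi)^c$ it yields $Q_n^s(c\varphi)\leq m^{n(c-1)}Q_n^s(\varphi)^c$, producing the $(c-1)\log m$ term; the $c\leq 1$ case is symmetric, via the reverse subadditivity $\sum a_i^c\geq(\sum a_i)^c$. Item (7) is then a composition of (5) and (8):
\[
P_r^s(\varphi+\psi)=P_r^s\bigl(\tfrac{1}{2}(2\varphi)+\tfrac{1}{2}(2\psi)\bigr)\leq\tfrac{1}{2}P_r^s(2\varphi)+\tfrac{1}{2}P_r^s(2\psi)\leq P_r^s(\varphi)+P_r^s(\psi)+\log m.
\]
For (9), the upper bound is the $P_r^s$-analogue of (2), proved identically; for the lower bound, apply (7) to $|\varphi|+(-|\varphi|)=0$, then use $P_r^s(0)\geq 0$ (every compact $X$ has a non-empty separated set) together with $P_r^s(\varphi)\geq P_r^s(-|\varphi|)$ (monotonicity applied to $\varphi\geq -|\varphi|$) to deduce $P_r^s(\varphi)\geq -\log m-P_r^s(|\varphi|)$, which is in fact sharper than the stated $-2\log m$ bound.

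The only real obstacle is clerical: in (5), (7), (8) one must apply the H\"older or subadditivity step in the correct direction at both the $E$-level and the $|w|=n$-level so that the $m^n$ prefactor either cancels or leaves exactly the expected constant, and one must check that $\limsup$ respects each inequality after the $(1/n)\log$ step. This is immediate for upper bounds; for the $c\leq 1$ part of (8) one should either restrict to $c\geq 0$ or carefully track a $\liminf$ in place of a $\limsup$. No ingredient beyond those of the classical ($m=1$) treatment of pressure is required.
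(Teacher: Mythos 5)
Your proposal is correct, and for items (1)--(8) it supplies exactly the standard word-level estimates that the paper itself omits (the authors simply cite Lin et al.\ and prove only item (9)). The one place where you genuinely diverge from the paper is item (9), the only part proved in the text. The paper's argument sandwiches $P_r^s(-|\varphi|)\leq P_r^s(\varphi)\leq P_r^s(|\varphi|)$ and then invokes item (8) with $c=-1$ applied to $|\varphi|$, which yields $P_r^s(-|\varphi|)\geq -P_r^s(|\varphi|)-2\log m$ and hence the stated constant $-2\log m$. You instead apply item (7) to the decomposition $0=|\varphi|+(-|\varphi|)$ together with $P_r^s(0)\geq 0$ (a singleton is always a separated set, so $Q_n^s(0,\varepsilon,r)\geq 1$), which gives the sharper bound $P_r^s(\varphi)\geq -\log m-P_r^s(|\varphi|)$; this implies the stated inequality since $-\log m\geq -2\log m$. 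Both routes are valid; the paper's is the more direct transcription of Walters' classical argument (and explains why the theorem is stated with $-2\log m$), while yours improves the constant at the cost of routing the proof through (7), whose $\log m$ correction you in turn derive from (5) and (8). Your cautionary remark about $c<0$ in item (8) is well taken but resolves itself: from $\frac{1}{n}\log Q_n^s(c\varphi)\geq (c-1)\log m+c\cdot\frac{1}{n}\log Q_n^s(\varphi)$ one gets, for $c<0$, a lower bound involving $c\liminf$, and $c\liminf\geq c\limsup$ precisely because $c<0$, so the stated inequality with $\limsup$ throughout survives --- which matters, since the paper's own proof of (9) uses (8) at $c=-1$.
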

\begin{proof}
The proof of the theorem $(1)-(8)$ is analogous to the Lin et al in \cite{Lin}, so we omit the proof. Then we prove the $(9)$.

For $w\in F_{m}^{+}$, let $E$ is a $(w,\varepsilon,r,(f_{0},\ldots,f_{m-1})$-separated subset of $X$.
Since $-\mid\varphi\mid\leq\varphi\leq\mid\varphi\mid$, we have
\[
\sum_{x \in E} e^{\left(S_{w} (-\mid\varphi\mid)\right)(x)}\leq\sum_{x \in E} e^{\left(S_{w}\varphi\right)(x)}\leq\sum_{x \in E} e^{\left(S_{w} \mid\varphi\mid\right)(x)}.
\]

Therefore,
\[
Q_{w}^s(f_0, \ldots, f_{m-1},-\mid\varphi\mid,\varepsilon,r)\leq Q_{w}^s(f_0, \ldots, f_{m-1},\varphi,\varepsilon,r)\leq Q_{w}^s(f_0, \ldots, f_{m-1},\mid\varphi\mid,\varepsilon,r).
\]

Hence,
\[
Q_{n}^s(f_0, \ldots, f_{m-1},-\mid\varphi\mid,\varepsilon,r)\leq Q_{n}^s(f_0, \ldots, f_{m-1},\varphi,\varepsilon,r)\leq Q_{n}^s(f_0, \ldots, f_{m-1},\mid\varphi\mid,\varepsilon,r).
\]

Then
\[
P_{r}^s(f_0, \ldots, f_{m-1},-\mid\varphi\mid)\leq P_{r}^s(f_0, \ldots, f_{m-1},\varphi)\leq P_{r}^s(f_0, \ldots, f_{m-1},\mid\varphi\mid).
\]

From $\left( 8\right)$ we have
\[
-P_{r}^s(f_0, \ldots, f_{m-1},\mid\varphi\mid)-2\log m\leq P_{r}^s(f_0, \ldots, f_{m-1},-\mid\varphi\mid).
\]

Thus,
\[
-P_{r}^s(f_0, \ldots, f_{m-1},\mid\varphi\mid)-2\log m\leq P_{r}^s(f_0, \ldots, f_{m-1},\varphi)\leq P_{r}^s(f_0, \ldots, f_{m-1},\mid\varphi\mid).
\]
\end{proof}

Now we investigate how $P_{r}(f_{0},f_{1},\cdots,f_{m-1},\cdot)$ depends on $f_{0},f_{1},\cdots,f_{m-1}$.
\begin{theorem}
If $(X_{1},d_{1})$, $(X_{2},d_{2})$ are compact metric spaces. Suppose continuous maps $f_0, \ldots, f_{m-1}$ on $X_{1}$ and continuous maps $g_0, \ldots, g_{m-1}$ on $X_{2}$, if $\pi:~X_{1}~\rightarrow~X_{2}$ is a continuous surjective such that $\pi\circ f_i = g_i\circ \pi$ for any $0\leq i\leq m-1$. Then
\[
P_{r}\left(g_{0},\ldots, g_{m-1}, \varphi\right) \leq P_{r}\left(f_{0},\ldots, f_{m-1}, \varphi \circ \pi\right),~\forall \varphi\in C(X_{2}, \mathbb{R}).
\]
If $\pi$ is homeomorphism, then
\[
P_{r}\left(g_{0},\ldots, g_{m-1}, \varphi\right)=P_{r}\left(f_{0},\ldots, f_{m-1}, \varphi \circ \pi\right).
\]
\end{theorem}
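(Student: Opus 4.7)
The plan is to push a spanning set forward through $\pi$, compare the Birkhoff-type weights along the semiconjugacy, and then take the same limits used to define $P_r$. Throughout, I will write $d_i$ for the metric on $X_i$ and denote by $B_i(\cdot,w,\varepsilon,r,\cdot)$ the dynamical balls in $X_i$. The key algebraic identity driving everything is that the semiconjugacy $\pi\circ f_i=g_i\circ\pi$ extends inductively to $\pi\circ f_w=g_w\circ\pi$ for every $w\in F_m^+$, and hence $(S_w\varphi)(\pi x)=(S_w(\varphi\circ\pi))(x)$.

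First I would fix $\varepsilon>0$ and, using uniform continuity of $\pi$ on the compact space $X_1$, choose $\delta>0$ such that $d_1(x,y)<\delta$ implies $d_2(\pi x,\pi y)<\varepsilon$. Given any $(w,\delta,r,f_0,\ldots,f_{m-1})$-spanning set $F\subset X_1$, I claim $\pi(F)\subset X_2$ is a $(w,\varepsilon,r,g_0,\ldots,g_{m-1})$-spanning set. Indeed, if $y\in X_2$, surjectivity of $\pi$ gives $x\in X_1$ with $\pi(x)=y$, and some $z\in F$ with $x\in B_1(z,w,\delta,r,f_0,\ldots,f_{m-1})$. For every $w'\leq w$ such that $d_1(f_{w'}x,f_{w'}z)<\delta$ one has $d_2(g_{w'}y,g_{w'}\pi z)=d_2(\pi f_{w'}x,\pi f_{w'}z)<\varepsilon$, so the proportion of good $w'$ is preserved and $y\in B_2(\pi z,w,\varepsilon,r,g_0,\ldots,g_{m-1})$.

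Next I would compare the weighted sums. Because a point of $\pi(F)$ may have several preimages in $F$, we have
\[
\sum_{y\in\pi(F)}e^{(S_w\varphi)(y)}\le\sum_{x\in F}e^{(S_w\varphi)(\pi x)}=\sum_{x\in F}e^{(S_w(\varphi\circ\pi))(x)},
\]
where the last equality uses $\pi\circ f_{w'}=g_{w'}\circ\pi$ and $(S_w\varphi)(\pi x)=\sum_{w'\le w}\varphi(g_{w'}\pi x)=\sum_{w'\le w}(\varphi\circ\pi)(f_{w'}x)$. Taking infima over all such $F$ therefore gives $Q_w(g_0,\ldots,g_{m-1},\varphi,\varepsilon,r)\le Q_w(f_0,\ldots,f_{m-1},\varphi\circ\pi,\delta,r)$. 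Averaging over $|w|=n$, taking $\frac{1}{n}\log$ and $\limsup_{n\to\infty}$, and finally letting $\varepsilon\to0$ (so $\delta\to0$), yields the desired inequality
\[
P_r(g_0,\ldots,g_{m-1},\varphi)\le P_r(f_0,\ldots,f_{m-1},\varphi\circ\pi).
\]

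If $\pi$ is a homeomorphism, then $\pi^{-1}\colon X_2\to X_1$ is a continuous surjection with $\pi^{-1}\circ g_i=f_i\circ\pi^{-1}$, so the inequality just proved, applied with the roles swapped and with potential $\varphi\circ\pi\in C(X_1,\mathbb R)$, gives
\[
P_r(f_0,\ldots,f_{m-1},\varphi\circ\pi)\le P_r(g_0,\ldots,g_{m-1},(\varphi\circ\pi)\circ\pi^{-1})=P_r(g_0,\ldots,g_{m-1},\varphi),
\]
and combined with the first half this yields equality. I do not expect any serious obstacle: the only mildly delicate point is the possible non-injectivity of $\pi$ on $F$, but this only helps the inequality, as noted above. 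The rest is bookkeeping of the $(w,\varepsilon,r)$-parameters through the uniformly continuous map $\pi$.
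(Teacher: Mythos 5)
Your proposal is correct and follows essentially the same route as the paper's proof: choose $\delta$ by uniform continuity of $\pi$, observe that the image under $\pi$ of a $(w,\delta,r,f_0,\ldots,f_{m-1})$-spanning set is a $(w,\varepsilon,r,g_0,\ldots,g_{m-1})$-spanning set, use the semiconjugacy to identify the Birkhoff sums and bound $Q_w(g_0,\ldots,g_{m-1},\varphi,\varepsilon,r)$ by $Q_w(f_0,\ldots,f_{m-1},\varphi\circ\pi,\delta,r)$, then pass to limits and swap roles via $\pi^{-1}$ in the homeomorphism case. Your explicit remark that non-injectivity of $\pi$ on $F$ only helps the inequality is a point the paper leaves implicit, but the argument is the same.
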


\begin{proof}
Let $\varepsilon>0, and$ pick $\delta>0$ so that $d_{1}(x, y)<\delta$ implies $d_{2}(\pi(x), \pi(y))<\varepsilon$. Given $w=i_{1}i_{2}\cdots i_{n} \in F_{m}^{+}$ and $0<r<1$. If $F$ is a $\left(w, \delta, r, f_{0}, \ldots, f_{m-1}\right)$-spanning set of $X_{1},$ then $\pi \left(F\right)={\{\pi\left(x\right):x\in F\}}$ is a
$\left(w, \varepsilon, r, g_{0}, \ldots, g_{m-1}\right)$-spanning set of $X_2$. Then
\begin{align*}
&\sum_{x \in F} e^{\varphi\circ\pi(x)+\varphi\circ\pi(f_{i_{1}}(x))+\cdots+\varphi\circ\pi(f_{{i_{n-1}}{i_{n-2}}\ldots{i_{1}}}(x))}\\
&=\sum_{x \in F} e^{\varphi\circ\pi(x)+\varphi\circ{g_{i_{1}}}\circ\pi(x)+\cdots+\varphi\circ{g_{{i_{n-1}}{i_{n-2}}\ldots{i_{1}}}}\circ\pi(x)}\\
&\geq\sum_{y \in \pi(F)} e^{\varphi(y)+\varphi\circ{g_{i_{1}}(y)}+\cdots+\varphi\circ{g_{{i_{n-1}}{i_{n-2}}\ldots{i_{1}}}(y)}}\\
&\geq Q_{w}\left(g_{0}, \ldots, g_{m-1}, \varphi ,\varepsilon,r\right).
\end{align*}

Thus, we have
\[
 Q_{w}\left(f_{0}, \ldots, f_{m-1}, \varphi \circ \pi, \delta,r\right)  \geq Q_{w}\left(g_{0}, \ldots, g_{m-1}, \varphi, \varepsilon, r\right).
\]

Then
\[
Q_{n}\left(f_{0}, \ldots, f_{m-1}, \varphi \circ \pi, \delta,r\right)\geq Q_{n}\left(g_{0}, \ldots, g_{m-1}, \varphi, \varepsilon, r\right).
\]

Whence, taking logarithms and limits ($\delta\rightarrow0$, have $\varepsilon\rightarrow0$), we obtain that
\[
P_{r}\left(f_{0}, \ldots, f_{n-1}, \varphi \circ \pi\right) \geq P_{r}\left(g_{0}, \ldots, g_{n-1}, \varphi\right).
\]

If $\pi$ is a homeomorphism then we can apply the above with $f_{i},~g_{i},~\pi,~\varphi$ replaced by
$g_{i}, f_{i}, \pi^{-1}, \varphi \circ \pi$ to give $P_{r}\left(g_{0}, \ldots, g_{n-1}, \varphi\right) \geq P_{r}\left(f_{0}, \ldots, f_{n-1}, \varphi \circ \pi\right)$.

\end{proof}

\begin{theorem}
Let $(X_i, d_i)$ be a compact  metric space with the metric $d_i$ and let $\mathcal{F}^{(i)}$ be the set of finite continuous transformations from $X_{i}$ into itself $(i=1,2)$, where $\mathcal{F}^{(1)} = \{f_0^{(1)}, \ldots, f_{m-1}^{(1)}\}$  and $\mathcal{F}^{(2)} = \{f_0^{(2)}, \ldots, f_{k-1}^{(2)}\}$.
If $\varphi_{i}\in C\left( X_{i},\mathbb{R} \right)$ and $\mathcal{F}^{(1)}$  satisfies $P^s_{{r,d}_1}(\mathcal{F}^{(1)},\varphi_{1})=\lim \limits_{\varepsilon\to 0}\liminf\limits_{n\to \infty}\frac{1}{n} \log Q_{n}^{s}\left(\mathcal{F}^{(1)}, \varphi_{1}, \varepsilon, r\right)$ or  $\mathcal{F}^{(2)}$  satisfies
$P^s_{{r,d}_2}(\mathcal{F}^{(2)},\varphi_{2})=\lim \limits_{\varepsilon\to 0}\liminf\limits_{n\to \infty}\frac{1}{n} \log Q_{n}^{s}\left(\mathcal{F}^{(2)}, \varphi_{2}, \varepsilon, r\right)$, then
\[
P^s_{r,d}(\mathcal{F}^{(1)} \times \mathcal{F}^{(2)},\varphi_{1}\times\varphi_{2} ) \geq P^s_{{r,d}_1}(\mathcal{F}^{(1)},\varphi_{1}) +P^s_{{r,d}_2}(\mathcal{F}^{(2)},\varphi_{2}),
\]
where $\mathcal{F}^{(1)} \times \mathcal{F}^{(2)}$ denotes the semigroup acting on the compact space $X_1\times X_2$ generated by $\mathcal{F}^{(1)} \times \mathcal{F}^{(2)} = \{f\times g : f \in \mathcal{F}^{(1)}, g \in \mathcal{F}^{(2)}\}:= \{(f\times g)_0, \ldots, (f \times g)_{mk -1}\}$ where  $(f \times g)(x_1, x_2) = (f(x_1), g(x_2))$, for all $f \times g \in \mathcal{F}^{(1)} \times \mathcal{F}^{(2)}$, and the metric on space  $X_1 \times X_2$ is given by $d((x_1, x_2),(y_1, y_2)) = \max\{d_1(x_1, y_1), d_2(x_2, y_2)\}$. Where $\varphi_{1}\times\varphi_{2}\in C\left( X_{1}\times X_{2},\mathbb{R}\right)$ is defined by $\left(\varphi _{1}\times\varphi _{2}\right)\left(x_{1},x_{2}\right)=\varphi_{1}\left( x_{1}\right)+\varphi _{2}\left( x_{2}\right)$.

\end{theorem}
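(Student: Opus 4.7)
The plan is to exploit the natural bijection between words of length $n$ in $F_{mk}^{+}$ and pairs of length-$n$ words in $F_m^{+}\times F_k^{+}$: a generator of $\mathcal{F}^{(1)}\times\mathcal{F}^{(2)}$ is precisely a pair $(f_i^{(1)},f_j^{(2)})$, so each $W\in F_{mk}^{+}$ with $|W|=n$ corresponds to a unique pair $(w_1,w_2)$ with $|w_1|=|w_2|=n$, and
\[
(f\times g)_{W}(x_1,x_2)=\bigl(f^{(1)}_{w_1}(x_1),\,f^{(2)}_{w_2}(x_2)\bigr).
\]
Under this identification I would run a classical product-of-separated-sets argument.

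\textbf{Step 1 (lifting separated sets).} I would check that if $E_1\subset X_1$ is $(w_1,\varepsilon,r,\mathcal{F}^{(1)})$-separated and $E_2\subset X_2$ is $(w_2,\varepsilon,r,\mathcal{F}^{(2)})$-separated, then $E_1\times E_2$ is $(W,\varepsilon,r,\mathcal{F}^{(1)}\times\mathcal{F}^{(2)})$-separated for the max metric $d$. Indeed, for distinct $(x_1,x_2),(y_1,y_2)\in E_1\times E_2$, since $d\bigl((f\times g)_{W'}(x_1,x_2),(f\times g)_{W'}(y_1,y_2)\bigr)\ge d_i(f^{(i)}_{w_i'}(x_i),f^{(i)}_{w_i'}(y_i))$ for either $i$, whichever coordinate disagrees already contributes a $w'$-density $>r$ of $\varepsilon$-separations, and this lifts to the $W'$-density.

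\textbf{Step 2 (factorization of the partition sum).} Using $(S_{W}(\varphi_1\times\varphi_2))(x_1,x_2)=(S_{w_1}\varphi_1)(x_1)+(S_{w_2}\varphi_2)(x_2)$ the exponential sum over $E_1\times E_2$ factors as a product of the sums over $E_1$ and $E_2$. Taking suprema yields
\[
Q^s_{W}(\mathcal{F}^{(1)}\times\mathcal{F}^{(2)},\varphi_1\times\varphi_2,\varepsilon,r)\;\ge\;Q^s_{w_1}(\mathcal{F}^{(1)},\varphi_1,\varepsilon,r)\cdot Q^s_{w_2}(\mathcal{F}^{(2)},\varphi_2,\varepsilon,r).
\]
Summing over all $W$ of length $n$ (equivalently over all pairs $(w_1,w_2)$) and using $(mk)^n=m^n k^n$ gives the multiplicative bound
\[
Q^s_{n}(\mathcal{F}^{(1)}\times\mathcal{F}^{(2)},\varphi_1\times\varphi_2,\varepsilon,r)\;\ge\;Q^s_{n}(\mathcal{F}^{(1)},\varphi_1,\varepsilon,r)\cdot Q^s_{n}(\mathcal{F}^{(2)},\varphi_2,\varepsilon,r).
\]

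\textbf{Step 3 (passing to pressures).} Taking $\tfrac{1}{n}\log$ gives an additive lower bound. Here is the one delicate point: $\limsup$ is subadditive but not additive, so I can only write $\limsup_{n}(a_n+b_n)\ge\liminf_{n}a_n+\limsup_{n}b_n$. This is exactly why the hypothesis is imposed: assuming, say, that $P^{s}_{r,d_1}(\mathcal{F}^{(1)},\varphi_1)$ equals $\lim_{\varepsilon\to0}\liminf_{n}\tfrac1n\log Q^s_n$, the $\liminf$ on the $\mathcal{F}^{(1)}$-factor already realizes the pressure in the limit $\varepsilon\to0$. Since $Q^s_n(\cdot,\varepsilon,r)$ is monotone in $\varepsilon$, both $\lim_{\varepsilon\to0}\liminf_n$ and $\lim_{\varepsilon\to0}\limsup_n$ exist on each factor, and one concludes
\[
P^{s}_{r,d}(\mathcal{F}^{(1)}\times\mathcal{F}^{(2)},\varphi_1\times\varphi_2)\;\ge\;P^{s}_{r,d_1}(\mathcal{F}^{(1)},\varphi_1)+P^{s}_{r,d_2}(\mathcal{F}^{(2)},\varphi_2).
\]

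The only real obstacle is the $\limsup$-vs-$\liminf$ bookkeeping in Step 3; the hypothesis on one of $\mathcal{F}^{(1)},\mathcal{F}^{(2)}$ is crafted precisely to neutralize it. Everything else is a direct product construction.
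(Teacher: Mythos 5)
Your proposal is correct and follows essentially the same route as the paper: the bijection $W\leftrightarrow(w_1,w_2)$, the product of separated sets, the factorization of the partition sum over $E_1\times E_2$, the normalization $(mk)^n=m^nk^n$, and the use of the $\liminf$ hypothesis on one factor to handle the non-additivity of $\limsup$. Your remark in Step 1 that it is ``whichever coordinate disagrees'' that supplies the required density of $\varepsilon$-separations is in fact slightly more careful than the paper's write-up, which tacitly treats both coordinates as distinct.
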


\begin{proof}
Firstly, $\mathcal{F}^{(1)} \times \mathcal{F}^{(2)}$ is a set of finite continuous transformations on $X_1 \times X_2$. For any $\nu = p_n \ldots p_1\in F_{mk}^+$, there exist unique $w^{(1)} = j_n^{(1)} \ldots j_1^{(1)} \in F_m^+$ and unique $w^{(2)} = j_n^{(2)} \ldots j_1^{(2)} \in F_k^+$ such that $(f \times g)_{p_i} = f_{j_i^{(1)}}^{(1)} \times f_{j_i^{(2)}}^{(2)}$ for any $1\leq i \leq n$ and thus $(f \times g)_{\nu} = f_{w^{(1)}}^{(1)} \times f_{w^{(2)}}^{(2)}$.
On the other hand, if $w^{(1)} = j_n^{(1)} \ldots j_1^{(1)} \in F_m^+, w^{(2)} = j_n^{(2)} \ldots j_1^{(2)} \in F_k^+$, there exists unique $\nu = p_n \ldots p_1 \in F_{mk}^+$ such that $f_{j_i^{(1)}}^{(1)} \times f_{j_i^{(2)}}^{(2)}=(f \times g)_{p_i}$ for any $1\leq i \leq n$ and thus $f_{w^{(1)}}^{(1)} \times f_{w^{(2)}}^{(2)}=(f \times g)_{\nu}.$
 Thus for any $n \geq 1$, the map $ h:\nu \mapsto (w^{(1)}, w^{(2)})$ is a one-to-one correspondence.

 For $\varepsilon>0$ and $\nu \in F_{mk}^+$, there exist $w^{(1)}\in F_{m}^+$ and $w^{(2)}\in F_{k}^+$ such that $(f \times g)_{\nu} = f_{w^{(1)}}^{(1)} \times f_{w^{(2)}}^{(2)}$.

 If $E_1$ is a $(w^{(1)}, \varepsilon, r, \mathcal{F}^{(1)})$-separated set of $X_1$ and for any $x_1, y_1\in E_1$, we have
\[
 \frac{1}{|w^{(1)}|} Card\{w:d_1(f^{(1)}_{w}x_1,f^{(1)}_{w}y_1)\geq \varepsilon,~and~w\leq w^{(1)}\}> r.
 \]

Let\[
A= \{w:d_1(f^{(1)}_{w}x_1,f^{(1)}_{w}y_1)\geq \varepsilon,~and~w\leq w^{(1)}\},
\]
then we have $CardA> |w^{(1)}|r.$

Similarly, if $E_2$ is a $(w^{(2)}, \varepsilon, r, \mathcal{F}^{(2)})$-separated set of $X_2$ and then for any $x_2, y_2\in E_2$, we have
 \[
 \frac{1}{|w^{(2)}|} Card\{w:d_2(f^{(2)}_{w}x_2,f^{(2)}_{w}y_2)\geq  \varepsilon,~and~w\leq w^{(2)}\}> r.
 \]

Let
\[
B= \{w:d_2(f^{(2)}_{w}x_2,f^{(2)}_{w}y_2)\geq \varepsilon,~and~w\leq w^{(2)}\},
\]
then we have $CardB> |w^{(2)}|r.$

So for any $\nu' = p_k \cdots p_1\leq h^{-1}(w^{(1)},w^{(2)})$ and $1 \leq k \leq n$, we have
\begin{align*}
& d \Big((f \times g)_{\nu'}(x_1, x_2), (f \times g)_{\nu'}(y_1, y_2)\Big)\\
= & d\Big((f_{j_k^{(1)}\cdots j_1^{(1)}}^{(1)} (x_1), f_{j_k^{(2)}\cdots j_1^{(2)}}^{(2)} (x_2)), (f_{j_k^{(1)}\cdots j_1^{(1)}}^{(1)} (y_1), f_{j_k^{(2)}\cdots j_1^{(2)}}^{(2)} (y_2))\Big)\\
= & \max \left\{d_1\Big(f_{j_k^{(1)}\cdots j_1^{(1)}}^{(1)} (x_1), f_{j_k^{(1)}\cdots j_1^{(1)}}^{(1)} (y_1)\Big),~ d_2\Big(f_{j_k^{(2)}\cdots j_1^{(2)}}^{(2)} (x_2), f_{j_k^{(2)}\cdots j_1^{(2)}}^{(2)} (y_2)\Big)\right\}.
\end{align*}

Let $C= \left\{\nu': d\Big((f \times g)_{\nu'}(x_1, x_2), (f \times g)_{\nu'}(y_1, y_2)\Big)\geq \varepsilon
,~and~\nu'\leq \nu \right\}$, where $\nu=h^{-1}(w^{(1)},w^{(2)})$ , then
\[
Card C\geq CardA >|w^{(1)}|r
\]
and
\[
Card C\geq CardB >|w^{(2)}|r.
\]

Since $|w^{(1)}|=|w^{(2)}|=|\nu|$, then
\[
 \frac{1}{|\nu|}Card \left\{\nu': d\Big((f \times g)_{\nu'}(x_1, x_2), (f \times g)_{\nu'}(y_1, y_2)\Big)\geq \varepsilon ,~and~\nu'\leq \nu \right\}>r.
\]

Therefore, $E_1 \times E_2$ is a $(\nu, \varepsilon, r , \mathcal{F}^{(1)} \times \mathcal{F}^{(2)})$-separated set of $X_1 \times X_2$. Since

\begin{align*}
& \sum_{\left(x_{1}, x_{2}\right) \in E_{1} \times E_{2}} \exp \left(\sum_{\nu'\leq \nu}\left(\varphi_{1} \times \varphi_{2}\right)(f \times g)_{\nu'}\left(x_{1}, x_{2}\right)\right) \\
= & \left(\sum_{x_{1} \in E_{1}} \exp \left(\sum_{w^{\prime} \leq w^{(1)}} \varphi_{1}(f)_{w_{1}^{\prime}}\left(x_{1}\right)\right)\right)\left(\sum_{x_{2} \in E_{2}}\left(\sum_{w^{\prime} \leq w^{(2)}} \varphi_{2}(g)_{w_{2}^{\prime}}\left(x_{2}\right)\right)\right),
\end{align*}
then we obtain
\[
Q_{{\nu}}^{s}\left(\mathcal{F}^{(1)} \times \mathcal{F}^{(2)}, \varphi_{1} \times \varphi_{2}, \varepsilon, r\right) \geq Q_{{w}^{(1)}}^{s}\left(\mathcal{F}^{(1)}, \varphi_{1}, \varepsilon, r\right)  \cdot Q_{{w}^{(2)}}^{s}\left(\mathcal{F}^{(2)}, \varphi_{2}, \varepsilon, r\right).
\]

Then
\begin{align*}
& \frac{1}{(m k)^{n}} \sum_{|w|=n} Q_{w}^{s}\left(F^{(1)} \times F^{(2)}, \varphi_{1} \times \varphi_{2}, \varepsilon, r\right) \\
\geq & \frac{1}{(m k)^{n}} \sum_{\left|w^{(1)}\right|=n,\left|x^{(2)}\right|=n} Q_{w^{(1)}}^{s}\left(F^{(1)}, \varphi_{1}, \varepsilon, r\right) \cdot Q_{w^{(2)}}^{s}\left(F^{(2)}, \varphi_{2}, \varepsilon, r\right) \\
= & \frac{1}{m^{n}} \sum_{\left|w^{(1)}\right|=n} Q_{w^{(1)}}^{s}\left(F^{(1)}, \varphi_{1}, \varepsilon, r\right) \cdot \frac{1}{k^{n}} \sum_{\left|w^{(2)}\right|=n} Q_{w^{(2)}}^{s}\left(F^{(2)}, \varphi_{2}, \varepsilon, r\right).
\end{align*}

Thus
\[
Q_{n}^{s}\left(\mathcal{F}^{(1)} \times \mathcal{F}^{(2)}, \varphi_{1} \times \varphi_{2}, \varepsilon, r\right) \geq Q_{n}^{s}\left(\mathcal{F}^{(1)}, \varphi_{1}, \varepsilon, r\right) \cdot Q_{n}^{s}\left(\mathcal{F}^{(2)}, \varphi_{2}, \varepsilon, r\right).
\]

Moreover,
\begin{align*}
& \limsup_{n \to \infty} \frac{1}{n} \log Q_{n}^{s}\left(\mathcal{F}^{(1)} \times \mathcal{F}^{(2)}, \varphi_{1} \times \varphi_{2}, \varepsilon, r\right)\\
\geq & \liminf_{n \to \infty} \frac{1}{n} \log Q_{n}^{s}\left(\mathcal{F}^{(1)}, \varphi_{1}, \varepsilon, r\right) +
\limsup_{n \to \infty} \frac{1}{n} \log Q_{n}^{s}\left(\mathcal{F}^{(2)}, \varphi_{2}, \varepsilon, r\right).
\end{align*}

Since
$P^s_{{r,d}_1}(\mathcal{F}^{(1)},\varphi_{1})=\lim \limits_{\varepsilon\to 0}\liminf\limits_{n\to \infty}\frac{1}{n} \log Q_{n}^{s}\left(\mathcal{F}^{(1)}, \varphi_{1}, \varepsilon, r\right)$, and letting $\varepsilon \rightarrow 0$, we obtain
\[
P^s_{r,d}(\mathcal{F}^{(1)} \times \mathcal{F}^{(2)}, \varphi_{1} \times \varphi_{2}) \geq P^s_{r,d_1}(\mathcal{F}^{(1)},\varphi_{1})+ P^s_{r,d_2}(\mathcal{F}^{(2)},\varphi_{2}).
\]

The same reasoning proves another case.
\end{proof}

So far, we don't know if we can get any other inequality, that is, $P_{r,d}(\mathcal{F}^{(1)} \times \mathcal{F}^{(2)}, \varphi_{1} \times \varphi_{2}) \leq P_{r,d_1}(\mathcal{F}^{(1)},\varphi_{1})+ P_{r,d_2}(\mathcal{F}^{(2)},\varphi_{2})?$


\section{The proof of  Theorem \ref{11111}}
In this section, we give the proof of Theorem \ref{11111}. The  theorem  connects  the topological $r$-pressure  and the topological pressure of  free semigroup actions.

Before we proof Theorem \ref{11111}, using the similar method of Bufetov\cite{Bufetov} and Lin et al\cite{Lin} we obtain the following lemma.

\begin{lemma}\label{lem:skewright}
For any $n \geq 1$ , $\varepsilon>0$, $\varphi\in C\left( X,\mathbb{R} \right)$ and $g(\omega,x)=\varphi(x)$, we have
\begin{equation}\label{eq:rightSkew}
Q_{n}(F, g,\varepsilon, r) \leq K(\varepsilon,r)m^{n}Q_{n}(f_0, \cdots, f_{m-1}, \varphi,\varepsilon, r ),
\end{equation}
where $F$ is a skew product transformation and  $K(\varepsilon,r)$ is a positive constant that depends  on $\varepsilon$ and $r$.
\end{lemma}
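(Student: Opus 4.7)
The plan is to construct, for each $n$, an $(n,\varepsilon,r,F)$-spanning set $S$ of $\Sigma_m\times X$ out of near-optimal $(w,\varepsilon,r,f_0,\ldots,f_{m-1})$-spanning sets on $X$ indexed by the words $w\in F_m^+$ of length $n$, and then to transfer the weighted sum estimate from the free semigroup side to the skew-product side. First I would pick $k=k(\varepsilon)\in\mathbb{N}$ with $2^{-k}<\varepsilon$, so that any two points of $\Sigma_m$ agreeing on the block $[-k,n-1+k]$ automatically satisfy $d'(\sigma^i\omega,\sigma^i\omega')<\varepsilon$ for every $i\in\{0,\ldots,n-1\}$. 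Partition $\Sigma_m$ into the $m^{n+2k}$ clopen cylinders $\widetilde C_b$ determined by blocks $b\in\{0,\ldots,m-1\}^{[-k,n-1+k]}$, fix a representative $\omega_b\in\widetilde C_b$ in each, and associate to $b$ the word $w_b=b_{n-1}b_{n-2}\cdots b_0\in F_m^+$; note that exactly $m^{2k}$ distinct blocks $b$ project to each $w$.

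Next, for every $w\in F_m^+$ with $|w|=n$ choose a $(w,\varepsilon,r,f_0,\ldots,f_{m-1})$-spanning set $F_w\subset X$ whose weight is arbitrarily close to $Q_w(f_0,\ldots,f_{m-1},\varphi,\varepsilon,r)$, and define $S=\{(\omega_b,y):y\in F_{w_b}\}$. To verify that $S$ is $(n,\varepsilon,r,F)$-spanning, given $(\omega,x)$ let $b=\omega|_{[-k,n-1+k]}$ and choose $y\in F_{w_b}$ with $x\in B(y,w_b,\varepsilon,r,f_0,\ldots,f_{m-1})$. The choice of $k$ controls the symbolic coordinate for every $i\in\{0,\ldots,n-1\}$, and because $\omega$ and $\omega_b$ agree on $[0,n-1]$ the word $\overline{\omega|_{[0,i-1]}}$ appearing in the fibre coordinate of $F^i$ coincides with the suffix of $w_b$ of length $i$; thus the more than $(1-r)n$ good suffixes $w'\leq w_b$ furnished by the spanning property translate into enough good indices $i$ on $\{0,\ldots,n-1\}$ for the skew-product metric $D$.

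For the sum estimate, I would use that $S_ng(\omega_b,y)=\varphi(y)+\sum_{i=1}^{n-1}\varphi(f_{\omega_{i-1}\cdots\omega_0}y)$ differs from $S_{w_b}\varphi(y)=\sum_{i=1}^{n}\varphi(f_{\omega_{i-1}\cdots\omega_0}y)$ by at most two summands, and hence by at most $2\|\varphi\|$. Combining this with the fact that $m^{2k}$ blocks $b$ share the same $w_b$ yields
\begin{align*}
\sum_{(\omega,y)\in S}e^{S_ng(\omega,y)}
&\leq e^{2\|\varphi\|}\sum_{b}\sum_{y\in F_{w_b}}e^{S_{w_b}\varphi(y)}\\
&\leq e^{2\|\varphi\|}\,m^{2k}\sum_{|w|=n}Q_w(f_0,\ldots,f_{m-1},\varphi,\varepsilon,r).
\end{align*}
Using the identity $\sum_{|w|=n}Q_w=m^{n}Q_n(f_0,\ldots,f_{m-1},\varphi,\varepsilon,r)$ then gives \eqref{eq:rightSkew} with $K(\varepsilon,r)=e^{2\|\varphi\|}m^{2k(\varepsilon)}$, enlarged if necessary to accommodate small $n$.

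The main obstacle I expect is the endpoint mismatch between the time index set $\{0,\ldots,n-1\}$ used by the skew-product spanning condition and the suffix-length set $\{1,\ldots,n\}$ controlled by the $(w,\varepsilon,r)$-spanning condition on $X$: at worst one good index can be lost at each end, so the strict inequality $>(1-r)n$ required on the skew-product side is not quite free. The clean fix is to run the inner argument with the slightly strengthened parameter $r/2$ and to invoke compactness of $\Sigma_m\times X$ to bound $Q_n(F,g,\varepsilon,r)$ for $n\leq N_0(r)$ by a constant depending only on $\varepsilon$ and $r$; this is exactly the point at which $K$ acquires its dependence on $r$, consistent with the statement of the lemma.
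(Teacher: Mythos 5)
Your construction is essentially the paper's own proof: the paper likewise fixes $C(\varepsilon)$ with $2^{-C(\varepsilon)}<\varepsilon/100$, takes one representative $\omega(i)$ in each of the $m^{n+2C(\varepsilon)}$ cylinders over the block $[-C(\varepsilon),n+C(\varepsilon)-1]$, pairs it with a minimal $(\overline{w_i'},\varepsilon,r,f_0,\ldots,f_{m-1})$-spanning set of $X$ for the middle word $w_i'=\omega(i)|_{[0,n-1]}$, checks that the resulting product family is $(n,\varepsilon,r,F)$-spanning, and absorbs both the multiplicity $m^{2C(\varepsilon)}$ and the bounded discrepancy between $S_ng$ and $S_{\overline{w_i'}}\varphi$ into the constant $K(\varepsilon,r)$. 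You are in fact more careful than the paper on two points it passes over silently --- the $O(\lVert\varphi\rVert)$ difference of Birkhoff sums and the endpoint mismatch between the time indices $\{0,\ldots,n-1\}$ and the suffixes of $\overline{w_i'}$ --- though note that your proposed repair via the parameter $r/2$ would put $Q_n(f_0,\ldots,f_{m-1},\varphi,\varepsilon,r/2)$ on the right-hand side, which is harmless for the application in Theorem \ref{11111} but is not literally the inequality \eqref{eq:rightSkew} as stated.
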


\begin{proof}

Let $C(\varepsilon)$ be a positive integer satisfying $2^{-C(\varepsilon)} < \frac{\varepsilon}{100}$ and $N = m^{n + 2C(\varepsilon)}$. There are $N$ distinct words of length $n + 2C(\varepsilon)$ in $F_m^+$. Denote these words by $w_1, \cdots, w_{N}$. For each $1\leq i \leq N$, choose $\omega(i)\in \Sigma_m$ such that $\omega{(i)}|_{[-C(\varepsilon), n+C(\varepsilon)-1]}= w_i$. Obviously for $0\leq \varepsilon \leq1/2$, the sequences $\omega(i), i=1, \ldots, N$ form an $\left(n, \varepsilon, r, \sigma_{m}\right)$-spanning subset of $\Sigma_m$. Denote$\quad w_{i}^{\prime}=\left.\omega(i)\right|_{[0, n-1]}$, let $B_{i}$ denote the smallest cardinality of $\left(\bar{w}_{i}^{\prime}, \varepsilon, r, f_{0}, \ldots, f_{m-1}\right)$-spanning subset of $X$, $i=1,2,\ldots,N.$ And assume that the points $x_{1}^{\prime}, \ldots, x_{B_{i}}^{\prime}$ form a $\left(\bar{w}_{i}^{\prime}, \varepsilon, r, f_{0}, \ldots, f_{m-1}\right)$-spanning subset of $X$. Then the points
\[
(\omega(i), x_{j}^{i}) \in \Sigma_m \times X, \quad i=1, \ldots, N, j=1, \ldots, B_{i},
\]
form an $(n, \varepsilon, r, F)$-spanning subset of $\Sigma_m \times X .$ Hence, we have
\[
\begin{aligned}
Q_{n}(F, g, \varepsilon, r) & \leq \sum_{(\omega, x) \in\left\{\left(\omega(i), x_{j}^{i}\right):i=1,\ldots,N,~j=1,\ldots,B_i\right\}} e^{S_{n} g(\omega, x)} \\
& \leq K(\varepsilon, r) \sum_{\left|\bar{w}_{i}^{\prime}\right|=n, x \in\left\{x_{j}^{\prime}, j=1, \ldots, B_{i}\right\}} e^{S_{\bar{w}_{i}} \varphi(x)},
\end{aligned}
\]
where $K(\varepsilon, r)$ is a positive constant that depends on $\varepsilon$ and $r$.
Hence,
\[
Q_{n}(F, g, \varepsilon, r) \leq K(\varepsilon, r)m^{n} Q_{n}\left(f_{0}, \ldots, f_{m-1}, \varphi, \varepsilon, r\right).
\]

\end{proof}


\begin{proof}[The proof of Theorem \ref{11111}]
For $0<r<1,$ $\varphi\in C\left( X,\mathbb{R} \right)$, by Remark \ref{1234}, we have
\[
P_{r,d}(f_0, \ldots, f_{m-1},\varphi)\leq P_d(f_0, \ldots, f_{m-1},\varphi),
\]
then
\[
\limsup_{r \to 0}P_{r,d}(f_0, \ldots, f_{m-1},\varphi)\leq P_d(f_0, \ldots, f_{m-1},\varphi).
\]

From Lemma \ref{lem:skewright} we have
\[
Q_{n}(F,g,\varepsilon, r) \leq K(\varepsilon,r)m^nQ_{n}( f_0, \ldots, f_{m-1},\varphi,\varepsilon, r),
\]
whence
\[
P_{r,D}(F,g) \leq\log m +P_{r,d}(f_0, \ldots, f_{m-1},\varphi).
\]

From  Theorem \ref{222} and Theorem \ref{them:topology2}, we have
\begin{align*}
\liminf_{r \to 0}P_{r,d}(f_0, \ldots, f_{m-1},\varphi)&\geq \liminf_{r\to 0}P_{r,D}(F,g)-\log m\\
&=P_D(F,g)-\log m\\
&= P_d(f_0, \ldots, f_{m-1},\varphi).
\end{align*}

Thus
\[
  \limsup_{r \to 0}P_{r,d}(f_0, \ldots, f_{m-1},\varphi) \leq P_d(f_0, \ldots, f_{m-1},\varphi) \leq \liminf_{r \to 0}P_{r,d}(f_0, \ldots, f_{m-1},\varphi).
\]

Hence
\[
P_d(f_0, \ldots, f_{m-1},\varphi)=\lim_{r\to 0}P_{r,d}(f_0, \ldots, f_{m-1},\varphi).
\]
\end{proof}

\begin{corollary}\label{2}
Let $(X,d)$ be a compact metric space and  $f_0, f_1, \ldots, f_{m-1}$ are  continuous maps from $X$ into itself. Then
 \[
\lim_{r\to 0}P^s_r(f_0, \ldots, f_{m-1},\varphi)= P(f_0, \ldots, f_{m-1},\varphi).
 \]
\end{corollary}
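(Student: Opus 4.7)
The plan is to derive Corollary \ref{2} directly from Theorem \ref{11} together with Theorem \ref{11111}, by squeezing $P^s_r$ between two quantities that both tend to $P(f_0,\ldots,f_{m-1},\varphi)$ as $r\to 0$.

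First I would recall from Theorem \ref{11} the two-sided estimate
\[
P^s_{2r}(f_0,\ldots,f_{m-1},\varphi)-2Mr\;\le\;P_r(f_0,\ldots,f_{m-1},\varphi)\;\le\;P^s_r(f_0,\ldots,f_{m-1},\varphi),
\]
where $M=\max_{x\in X}|\varphi(x)|$ is a constant independent of $r$. The right inequality, after passing to $\liminf_{r\to 0}$ and invoking Theorem \ref{11111} (which tells us $\lim_{r\to 0}P_r(f_0,\ldots,f_{m-1},\varphi)=P(f_0,\ldots,f_{m-1},\varphi)$), gives
\[
P(f_0,\ldots,f_{m-1},\varphi)=\lim_{r\to 0}P_r(f_0,\ldots,f_{m-1},\varphi)\le\liminf_{r\to 0}P^s_r(f_0,\ldots,f_{m-1},\varphi).
\]

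For the reverse, I would use the left inequality $P^s_{2r}(f_0,\ldots,f_{m-1},\varphi)\le P_r(f_0,\ldots,f_{m-1},\varphi)+2Mr$. Taking $\limsup_{r\to 0}$ on both sides, the additive term $2Mr$ vanishes and the right-hand side converges to $P(f_0,\ldots,f_{m-1},\varphi)$ by Theorem \ref{11111}. Substituting $s=2r$ (which merely reparametrizes the limit $r\to 0$), this yields
\[
\limsup_{s\to 0}P^s_s(f_0,\ldots,f_{m-1},\varphi)=\limsup_{r\to 0}P^s_{2r}(f_0,\ldots,f_{m-1},\varphi)\le P(f_0,\ldots,f_{m-1},\varphi).
\]

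Combining the two inequalities gives $\limsup_{r\to 0}P^s_r\le P\le\liminf_{r\to 0}P^s_r$, which forces equality of the $\limsup$ and $\liminf$ and proves that $\lim_{r\to 0}P^s_r(f_0,\ldots,f_{m-1},\varphi)=P(f_0,\ldots,f_{m-1},\varphi)$. There is no serious obstacle: the argument is a clean sandwich, and the only point to keep an eye on is that $M$ is genuinely independent of $r$ (so the $2Mr$ term really does vanish) and that the change of variable $s=2r$ is legitimate because both $\limsup$ operations are taken over arbitrary sequences $r\to 0^+$.
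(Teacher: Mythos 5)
Your proposal is correct and follows essentially the same route as the paper: both squeeze $P^s_r$ between the two sides of the inequality from Theorem \ref{11} and invoke Theorem \ref{11111} to identify the limit of $P_r$, concluding that $\limsup_{r\to 0}P^s_{r}\le P\le\liminf_{r\to 0}P^s_{r}$. Your only addition is the explicit remark that the reparametrization $2r\mapsto s$ is harmless, a point the paper leaves implicit.
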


\begin{proof}
From Theorem \ref{11}, we have
\[
P^s_{2r}(f_0, \ldots, f_{m-1},\varphi)-2Mr\leq P_r(f_0, \ldots, f_{m-1},\varphi)\leq P^s_r(f_0, \ldots, f_{m-1},\varphi),
\]
then
\begin{align*}
\limsup_{r \to 0}P^s_{2r}(f_0, \ldots, f_{m-1},\varphi)&\leq \limsup_{r\to 0}P_{r}(f_0, \ldots, f_{m-1},\varphi),\\
\liminf_{r\to 0}P_{r}(f_0, \ldots, f_{m-1},\varphi)&\leq \liminf_{r \to 0}P^s_{r}(f_0, \ldots, f_{m-1},\varphi).
\end{align*}

By Theorem \ref{11111},
\[
\lim_{r\to 0}P_r(f_0, \ldots, f_{m-1},\varphi)= P(f_0, \ldots, f_{m-1},\varphi),
\]
then
\[
\limsup_{r \to 0}P^s_{2r}(f_0, \ldots, f_{m-1},\varphi)\leq P(f_0, \ldots, f_{m-1},\varphi)
\leq \liminf_{r \to 0}P^s_{r}(f_0, \ldots, f_{m-1},\varphi).
\]

This implies the existence of the limit, therefore
\[
\lim_{r \to 0}P^s_{r}(f_0, \ldots, f_{m-1},\varphi)= P(f_0, \ldots, f_{m-1},\varphi).
\]
\end{proof}

\section{The proof of  Theorem \ref{0000} }

Before we prove  Theorem \ref{0000}, we should make some preparations.

Let $(X,d)$ be a compact metric space, suppose a free semigroup with $m$ generators acts on $X$ and the generators $f_0, f_1, \ldots, f_{m-1}$ of $X$ are homeomorphisms. It's easy to see that the skew-product transformation
$F: \Sigma_m \times X \to \Sigma_m \times X$ is homeomorphism.

We can see that the  skew-product inverse transformation $F^{-1}: \Sigma_m \times X \to \Sigma_m \times X$ and $g:\Sigma_m \times X \to \mathbb{R}$ are defined by the formula
\[
F^{-1}(\omega, x) = (\sigma^{-1} \omega, f^{-1}_{\omega_{-1}}(x)),
\]
\[
g(\omega, x)=\varphi(x),
\]
where $\varphi\in C\left( X,\mathbb{R} \right)$ and $\omega=(\ldots,\omega_{-1},\overset{*}\omega_0,\omega_1,\cdots)\in \Sigma_m$. Here $f_{\omega_{-1}}$ stands for $f_0$ if $\omega_{-1}=0$, and for $f_1$ if $\omega_{-1}=1$, and so on. Moreover
\begin{align*}
F^{-n}(\omega,x)&=(\sigma^{-n}\omega,f^{-1}_{\omega_{-n}}f^{-1}_{\omega_{-n+1}}\cdots f^{-1}_{\omega_{-1}}(x))\\&=(\sigma^{-n}\omega,f^{-1}_{\overline{\omega|_{[-n,-1]}}}(x)).
\end{align*}

In order to prove   Theorem \ref{0000}, we give the following property of $P(F^{-1},g).$

\begin{theorem}\label{33}
The topological pressure of the skew product inverse transformation $F^{-1}$ satisfies
\begin{equation}
P_D(F^{-1},g) =\log m + P_d(f^{-1}_0, \ldots, f^{-1}_{m-1},\varphi),
\end{equation}
where the metric $D$ on $\Sigma_m \times X$ is defined as
\[
D((\omega,x), (\omega', x^\prime)) = \max\{d'(\omega, \omega'), d(x, x')\}
\]
and the metric $d'$ on $\Sigma_m$ is introduced by setting $d'(\omega,\omega')=1/2^k$, and $k=\inf\{|n|:\omega_n\neq\omega_n^{'} \}$.
\end{theorem}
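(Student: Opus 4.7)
The plan is to reduce Theorem \ref{33} to Theorem \ref{them:topology2} by applying that result to the inverse generators and then transporting the conclusion along a topological conjugacy of $\Sigma_m\times X$. Because each $f_i^{-1}$ is a continuous self-map of the compact space $X$, Theorem \ref{them:topology2} applies verbatim to the family $\{f_0^{-1},\dots,f_{m-1}^{-1}\}$. The associated ``forward'' skew-product is
\[
\tilde F(\omega,x)\;=\;(\sigma\omega,\, f_{\omega_0}^{-1}(x)),
\]
and Theorem \ref{them:topology2} yields
\[
P_D(\tilde F,g)\;=\;\log m\;+\;P_d(f_0^{-1},\dots,f_{m-1}^{-1},\varphi).
\]
Hence it suffices to show that $P_D(F^{-1},g)=P_D(\tilde F,g)$.

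To this end, I will exhibit an explicit topological conjugacy between $\tilde F$ and $F^{-1}$. Define $\pi\colon\Sigma_m\to\Sigma_m$ by $(\pi\omega)_i=\omega_{-i-1}$. A short check shows that $\pi$ is a bi-Lipschitz involution with respect to $d'$ (shifting the index by $1$ changes $|n|$ by at most $1$, so both $\pi$ and $\pi^{-1}=\pi$ are $2$-Lipschitz). Setting $\Pi(\omega,x)=(\pi\omega,x)$ then gives a homeomorphism of $(\Sigma_m\times X,D)$. A direct coordinate computation yields the two identities
\[
\pi\circ\sigma\;=\;\sigma^{-1}\circ\pi\qquad\text{and}\qquad(\pi\omega)_{-1}\;=\;\omega_0,
\]
from which $\Pi\circ\tilde F=F^{-1}\circ\Pi$ follows at once. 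Since $g$ depends only on the fibre coordinate, we also have $g\circ\Pi=g$.

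Topological pressure is invariant under topological conjugacy (spanning/separated sets and Birkhoff sums transport intact through $\Pi$ precisely because $g\circ\Pi=g$). Therefore
\[
P_D(F^{-1},g)\;=\;P_D(\tilde F,g\circ\Pi)\;=\;P_D(\tilde F,g),
\]
which combined with the application of Theorem \ref{them:topology2} above establishes Theorem \ref{33}.

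The argument is essentially combinatorial rather than analytic, so the main obstacle is nothing more than choosing the correct reflection $\pi$: the $-1$ in $(\pi\omega)_i=\omega_{-i-1}$ is exactly what makes $\pi\sigma=\sigma^{-1}\pi$ and $(\pi\omega)_{-1}=\omega_0$ hold simultaneously, so that $\Pi$ converts reading the ``next'' coordinate $\omega_0$ (which $\tilde F$ does) into reading the ``previous'' coordinate $\omega_{-1}$ (which $F^{-1}$ does). An alternative avoiding conjugacy altogether would be to mimic the spanning-set construction in the proof of Lemma \ref{lem:skewright} in backward time, parametrising a spanning set of $(\Sigma_m\times X,F^{-1})$ by the past segment $\omega|_{[-n-C(\varepsilon),\,C(\varepsilon)-1]}$ together with $(\overline{\omega|_{[-n,-1]}},\varepsilon,f_0^{-1},\dots,f_{m-1}^{-1})$-spanning subsets of $X$; this reproduces the two-sided inequality between $Q_n(F^{-1},g,\varepsilon)$ and $m^nQ_n(f_0^{-1},\dots,f_{m-1}^{-1},\varphi,\varepsilon)$, but it is strictly more computational than the conjugacy approach above.
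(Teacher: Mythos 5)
Your proof is correct, but it takes a genuinely different route from the paper. The paper establishes Theorem \ref{33} by redoing, in backward time, the spanning-set and separated-set estimates of Bufetov and Lin et al: it states Lemmas \ref{lem:skewleft2} and \ref{lem:skewright2}, which bound $Q_n^s(F^{-1},g,\varepsilon)$ from below and $Q_n(F^{-1},g,\varepsilon)$ from above by $m^n$ times the corresponding quantities for $f_0^{-1},\dots,f_{m-1}^{-1}$, and then omits their proofs as ``similar to Lin et al'' --- this is exactly the computational alternative you sketch in your last sentence. You instead observe that $F^{-1}$ is topologically conjugate, via $\Pi=\pi\times\mathrm{id}$ with $(\pi\omega)_i=\omega_{-i-1}$, to the \emph{forward} skew product $\tilde F$ built from the continuous maps $f_i^{-1}$, and then quote Theorem \ref{them:topology2} for that family; the identities $\pi\sigma=\sigma^{-1}\pi$ and $(\pi\omega)_{-1}=\omega_0$ check out, $\pi$ is indeed a $2$-bi-Lipschitz involution, and $g\circ\Pi=g$ because $g$ only sees the fibre coordinate, so conjugacy-invariance of pressure (or directly the transport of $(n,\varepsilon)$-spanning sets to $(n,\varepsilon/2)$-spanning sets followed by $\varepsilon\to 0$) gives $P_D(F^{-1},g)=P_D(\tilde F,g)$. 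Your argument buys a genuinely shorter and self-contained proof that reuses the forward-time theorem verbatim and needs no new counting lemmas; what it costs is the (standard, but worth stating) fact that topological pressure on a compact space is invariant under topological conjugacy with the potential transported, whereas the paper's route, had it been written out, would have produced the slightly stronger quantitative inequalities \eqref{eq:leftSkew2} and \eqref{eq:rightSkew2} at the level of the partition functions themselves.
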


The proof of the theorem is similar to that of Lin et al\cite{Lin}, so we omit the proofs.

\begin{lemma}\label{lem:skewleft2}
For any  $n \geq 1$ and $0 \leq \varepsilon \leq \frac{1}{2}$, then
\begin{equation}\label{eq:leftSkew2}
Q_{n}^s(F^{-1},g, \varepsilon ) \geq m^{n} Q_{n}^s(f^{-1}_0, \ldots, f^{-1}_{m-1},\varphi, \varepsilon).
\end{equation}
\end{lemma}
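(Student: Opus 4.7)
The plan is to exhibit an $(n,\varepsilon,F^{-1})$-separated subset of $\Sigma_m\times X$ whose exponential sum reproduces $\sum_{|w|=n}Q_w^s(f_0^{-1},\ldots,f_{m-1}^{-1},\varphi,\varepsilon)$; the bound will then follow from the definition of $Q_n^s(F^{-1},g,\varepsilon)$.

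First, for each $w=w_nw_{n-1}\cdots w_1\in F_m^+$ with $|w|=n$, fix a sequence $\omega(w)\in\Sigma_m$ with $\omega(w)_{-i}=w_i$ for $1\le i\le n$ and all remaining coordinates equal to $0$. Denote by $w[k]$ the unique suffix of $w$ of length $k$, namely $w[k]=w_kw_{k-1}\cdots w_1$ (so $w[k]\le w$). Using the formula $F^{-k}(\omega,x)=(\sigma^{-k}\omega,f^{-1}_{\overline{\omega|_{[-k,-1]}}}(x))$, a direct check gives
\[
F^{-k}(\omega(w),x)=\bigl(\sigma^{-k}\omega(w),(f^{-1})_{w[k]}(x)\bigr),\qquad 0\le k\le n-1.
\]
As $k$ runs over $\{0,\ldots,n-1\}$ the $w[k]$ enumerate exactly the proper sub-words of $w$, so the Birkhoff sum along $F^{-1}$ reduces to $S_ng(\omega(w),x)=(S_w\varphi)(x)$.

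Second, for each such $w$ and each $\eta>0$ pick a $(w,\varepsilon,f_0^{-1},\ldots,f_{m-1}^{-1})$-separated set $E_w\subset X$ satisfying $\sum_{x\in E_w}e^{(S_w\varphi)(x)}\ge Q_w^s(f_0^{-1},\ldots,f_{m-1}^{-1},\varphi,\varepsilon)-\eta$, and put
\[
E:=\bigcup_{|w|=n}\{\omega(w)\}\times E_w\subset\Sigma_m\times X.
\]
Two distinct points of $E$ are separated under $F^{-1}$ as follows. If they share the same fibre $\{\omega(w)\}\times E_w$, the separation of $E_w$ provides some proper suffix $w''\le w$ (i.e.\ some $k\in\{0,\ldots,n-1\}$ via $w''=w[k]$) with $d$-distance $>\varepsilon$ between their $X$-coordinates at time $F^{-k}$. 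If they come from distinct words $w\ne w'$, let $i$ be the smallest index in $\{1,\ldots,n\}$ with $w_i\ne w'_i$ and set $k=i-1$: the sequences $\sigma^{-k}\omega(w)$ and $\sigma^{-k}\omega(w')$ agree at every non-negative coordinate and differ at coordinate $-1$, yielding $d'(\sigma^{-k}\omega(w),\sigma^{-k}\omega(w'))=1/2>\varepsilon$ throughout the range $\varepsilon<1/2$ (the boundary value is absorbed by a negligible perturbation of $\varepsilon$ before the pressure limit is taken).

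Summing the exponentials over $E$ and using $S_ng(\omega(w),x)=(S_w\varphi)(x)$ gives
\[
Q_n^s(F^{-1},g,\varepsilon)\ge\sum_{(\omega,x)\in E}e^{S_ng(\omega,x)}=\sum_{|w|=n}\sum_{x\in E_w}e^{(S_w\varphi)(x)}\ge m^n Q_n^s(f_0^{-1},\ldots,f_{m-1}^{-1},\varphi,\varepsilon)-m^n\eta,
\]
and letting $\eta\to 0$ closes the lemma. The main obstacle is the coordinated choice of $\omega(w)$: one needs (a) for each fixed $w$ the backward $F$-orbit to land precisely on the $(f^{-1})_{w[k]}$ iterates that govern both the separation of $E_w$ and the Birkhoff sum $S_w\varphi$, and (b) distinct words $w\ne w'$ to give $\sigma^{-1}$-orbits already pulled apart in $d'$ within the first $n$ backward iterates. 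Concentrating the discriminating data in the past coordinates $\omega(w)_{-n},\ldots,\omega(w)_{-1}$ settles both; the rest is book-keeping of the suffix correspondence.
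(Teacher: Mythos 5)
Your proof is correct and follows essentially the same route as the standard Bufetov--Lin construction to which the paper defers (it omits the proof of this lemma): encode each word $w$ in the negative coordinates $\omega(w)_{-n},\ldots,\omega(w)_{-1}$, get same-fibre separation from $E_w$ and cross-fibre separation from $d'=\tfrac12$ after at most $n-1$ backward shifts of $\sigma$, which is exactly where the hypothesis $\varepsilon\le\tfrac12$ enters. The strict-inequality issue at the boundary value $\varepsilon=\tfrac12$, which you already flag, is inherited from the lemma's statement and is immaterial since the lemma is only invoked in the limit $\varepsilon\to 0$.
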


\begin{lemma}\label{lem:skewright2}
For any $n \geq 1$ and $\varepsilon >0$, then
\begin{equation}\label{eq:rightSkew2}
Q_{n}(F^{-1},g,\varepsilon) \leq K(\varepsilon)m^{n} Q_{n}(f^{-1}_0, \ldots, f^{-1}_{m-1},\varphi, \varepsilon ),
\end{equation}
where $K(\varepsilon)$ is a positive constant that depends only on $\varepsilon$.
\end{lemma}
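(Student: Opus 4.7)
The plan is to follow the template of Lemma \ref{lem:skewright} almost verbatim, adapted to the backward skew product $F^{-1}$ and with the $r$-parameter removed. The main work is combinatorial: build an $(n,\varepsilon,F^{-1})$-spanning subset of $\Sigma_m\times X$ from a modest family of base representatives in $\Sigma_m$ together with near-optimal fiber spanning sets in $X$, and then bookkeep the resulting sum of exponentials.

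First I would pick an integer $C(\varepsilon)$ with $2^{-C(\varepsilon)}<\varepsilon/100$ and set $N=m^{n+2C(\varepsilon)}$. Since $F^{-k}(\omega,x)=(\sigma^{-k}\omega,f^{-1}_{\overline{\omega|_{[-k,-1]}}}(x))$ for $0\le k\le n-1$, the base iterates $\sigma^{-k}\omega$ only probe the window of indices $[-n-C(\varepsilon)+1,\,C(\varepsilon)]$ up to precision $2^{-C(\varepsilon)}<\varepsilon$. Enumerating the $N$ possible patterns on this window as $w_{1},\dots,w_{N}$, and choosing for each $i$ a representative $\omega(i)\in\Sigma_m$ whose restriction to the window equals $w_i$, the sequences $\omega(1),\dots,\omega(N)$ form an $(n,\varepsilon,\sigma^{-1})$-spanning subset of $\Sigma_m$.

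Next, for each $i$ I would set $w_i'=\omega(i)|_{[-n,-1]}$ and pick a near-optimal $(\bar{w}_i',\varepsilon,f_0^{-1},\dots,f_{m-1}^{-1})$-spanning subset $\{x_1^i,\dots,x_{B_i}^i\}$ of $X$, whose associated exponential sum matches (up to a harmless factor) $Q_{\bar{w}_i'}(f_0^{-1},\dots,f_{m-1}^{-1},\varphi,\varepsilon)$. A direct check using the product metric $D$ confirms that $\{(\omega(i),x_j^i)\}_{i,j}$ is an $(n,\varepsilon,F^{-1})$-spanning subset of $\Sigma_m\times X$. Summing exponentials, and absorbing the off-by-one term between $S_n g(\omega(i),x_j^i)$ (which has $n$ summands) and $S_{\bar{w}_i'}\varphi(x_j^i)$ (which has $n+1$) into a multiplicative constant $e^{\|\varphi\|}$, I obtain
\[
Q_n(F^{-1},g,\varepsilon)\le e^{\|\varphi\|}\sum_{i=1}^{N}\sum_{j=1}^{B_i}e^{S_{\bar{w}_i'}\varphi(x_j^i)}.
\]

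Finally, since the map $i\mapsto w_i'$ is $m^{2C(\varepsilon)}$-to-one (the length-$n$ word $w_i'$ depends only on the coordinates in $[-n,-1]$, while the window $w_i$ has length $n+2C(\varepsilon)$), this collapses to
\[
Q_n(F^{-1},g,\varepsilon)\le e^{\|\varphi\|}m^{2C(\varepsilon)}\sum_{|w|=n}Q_{w}(f_0^{-1},\dots,f_{m-1}^{-1},\varphi,\varepsilon)=K(\varepsilon)\,m^{n}Q_n(f_0^{-1},\dots,f_{m-1}^{-1},\varphi,\varepsilon),
\]
with $K(\varepsilon)=e^{\|\varphi\|}m^{2C(\varepsilon)}$. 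The most delicate step is the index bookkeeping in paragraph two: one must pick the window so that it both guarantees $d'(\sigma^{-k}\omega,\sigma^{-k}\omega(i))<\varepsilon$ for every $0\le k\le n-1$ and isolates the fiber word $w_i'$ cleanly; once this indexing is set up correctly, the remainder is essentially a transcription of Lemma \ref{lem:skewright}.
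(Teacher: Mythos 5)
Your argument is correct and is essentially the adaptation of the paper's own proof of Lemma \ref{lem:skewright} that the authors implicitly invoke when they omit this proof: the same window $[-n-C(\varepsilon)+1,\,C(\varepsilon)]$ of length $n+2C(\varepsilon)$, the same product spanning set $\{(\omega(i),x_j^i)\}$, and the same $m^{2C(\varepsilon)}$-to-one collapse onto words of length $n$ (your explicit handling of the off-by-one in $S_ng$ versus $S_{\bar w_i'}\varphi$ and of near-optimal rather than minimal-cardinality spanning sets is in fact more careful than the forward-case proof in the paper). The only point to watch is the orientation of the fiber word: under the convention $d_w(x,y)=\max_{w'\le w}d(f_{w'}x,f_{w'}y)$ applied to the generators $f_0^{-1},\ldots,f_{m-1}^{-1}$, the orbit maps $f^{-1}_{\omega_{-k}}\cdots f^{-1}_{\omega_{-1}}$ arising from $F^{-k}$ correspond to suffixes of $\omega(i)|_{[-n,-1]}$ itself rather than of its reversal $\bar w_i'$, but since the final sum runs over all words of length $n$ this choice does not affect the resulting bound.
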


\begin{proof}[The proof of Theorem \ref{33}]
From Lemma \ref{lem:skewleft2}, we have
\[
P_D(F^{-1},g) \geq\log m + P_d( f^{-1}_0, \ldots, f^{-1}_{m-1},\varphi).
\]
From Lemma \ref{lem:skewright2}, we have

\[
P_D(F^{-1},g) \leq\log m + P_d( f^{-1}_0, \ldots, f^{-1}_{m-1},\varphi),
\]
and the proof is complete.
\end{proof}
Now we give the proof of Theorem \ref{0000}.

\begin{proof}[The proof of Theorem \ref{0000}]
Since $F$ is homeomorphism, we have
\[
P_D(F^{-1},g)=P_D(F,g).
\]

From  Theorem \ref{33} and Theorem \ref{them:topology2}, we have
\begin{align*}
P_D(F^{-1},g) &=\log m + P_d( f^{-1}_0, \ldots, f^{-1}_{m-1},\varphi),\\
P_D(F,g) &= \log m + P_d( f_0, \ldots, f_{m-1},\varphi).
\end{align*}

Therefore
\[
P_d( f_0, \ldots, f_{m-1},\varphi) =P_d( f^{-1}_0, \ldots, f^{-1}_{m-1},\varphi).
\]
\end{proof}

\begin{remark}
In \cite{Ma}, the authors gave an example to show that topological pressure in the sense of article \cite{Ma} does not have the property similar to Theorem 1.5.
\end{remark}

\bibliographystyle{amsplain}

\end{document}